\newtheorem{theorem}{Theorem}[section]
\newtheorem{lemma}[theorem]{Lemma}
\newtheorem{proposition}[theorem]{Proposition}
\newtheorem{corollary}[theorem]{Corollary}
\theoremstyle{plain}
\theoremstyle{definition}
\newtheorem{definition}[theorem]{Definition}
\newtheorem{remark}[theorem]{Remark}
\numberwithin{equation}{section}
\renewcommand{\labelenumi}{\textup{(\theenumi)}}
\renewcommand{\phi}{\varphi}
\newcommand{\Homeo}{\operatorname{Homeo}}
\newcommand{\id}{\operatorname{id}}
\newcommand{\Ker}{\operatorname{Ker}}
\newcommand{\Ad}{\operatorname{Ad}}
\newcommand{\K}{\mathcal{K}}
\newcommand{\C}{\mathcal{C}}
\newcommand{\N}{\mathbb{N}}
\newcommand{\R}{\mathbb{R}}
\newcommand{\T}{\mathbb{T}}
\newcommand{\Z}{\mathbb{Z}}
\title{State splitting,  strong shift equivalence and stable isomorphism of Cuntz--Krieger algebras}
\author{Kengo Matsumoto \\
Department of Mathematics \\
Joetsu University of Education \\
Joetsu, 943-8512, Japan
}
\date{ }
\begin{document}
\maketitle

\def\det{{{\operatorname{det}}}}

\begin{abstract}
We  prove that if two nonnegative matrices are strong shift equivalent, 
the associated stable Cuntz--Krieger algebras with generalized gauge actions
are  conjugate.
The proof is done by a purely functional analytic method and based on constructing imprimitivity bimodule from bipartite directed graphs through strong shift equivalent matrices,
so that we may clarify K-theoretic behavior of the stable conjugacy 
between the associated stable Cuntz--Krieger algebras.
We also examine our machinery for the matrices obtained by state splitting graphs, 
so that topological conjugacy of the topological Markov shifts is 
described in terms of some equivalence relation of the Cuntz--Krieger algebras 
with canonical masas and the gauge actions without stabilization. 
\end{abstract}

{\it Mathematics Subject Classification}:
 Primary 46L55; Secondary 37B10, 46L35.

{\it Keywords and phrases}:
Topological Markov shifts, strong shift equivalence,  Cuntz--Krieger algebras,
state splitting


\def\OA{{{\mathcal{O}}_A}}
\def\OB{{{\mathcal{O}}_B}}
\def\OZ{{{\mathcal{O}}_Z}}
\def\V{{\mathcal{V}}}
\def\E{{\mathcal{E}}}
\def\OTA{{{\mathcal{O}}_{\tilde{A}}}}
\def\SOA{{{\mathcal{O}}_A}\otimes{\mathcal{K}}}
\def\SOB{{{\mathcal{O}}_B}\otimes{\mathcal{K}}}
\def\SOZ{{{\mathcal{O}}_Z}\otimes{\mathcal{K}}}
\def\SOTA{{{\mathcal{O}}_{\tilde{A}}\otimes{\mathcal{K}}}}
\def\DA{{{\mathcal{D}}_A}}
\def\DB{{{\mathcal{D}}_B}}
\def\DZ{{{\mathcal{D}}_Z}}
\def\DTA{{{\mathcal{D}}_{\tilde{A}}}}
\def\SDA{{{\mathcal{D}}_A}\otimes{\mathcal{C}}}
\def\SDB{{{\mathcal{D}}_B}\otimes{\mathcal{C}}}
\def\SDZ{{{\mathcal{D}}_Z}\otimes{\mathcal{C}}}
\def\SDTA{{{\mathcal{D}}_{\tilde{A}}\otimes{\mathcal{C}}}}
\def\BC{{{\mathcal{B}}_C}}
\def\BD{{{\mathcal{B}}_D}}
\def\OAG{{\mathcal{O}}_{A^G}}
\def\OBG{{\mathcal{O}}_{B^G}}
\def\O2{{{\mathcal{O}}_2}}
\def\D2{{{\mathcal{D}}_2}}

\newcommand{\mathP}{\mathcal{P}}
\def\OAG{{\mathcal{O}}_{A^G}}
\def\DAG{{\mathcal{D}}_{A^G}}
\def\OBG{{\mathcal{O}}_{B^G}}
\def\OAGP{{\mathcal{O}}_{A^{{[\mathP]}}}} 
\def\DAGP{{\mathcal{D}}_{A^{{[\mathP]}}}} 
\def\OAGHP{{\mathcal{O}}_{{Z^{[\mathP]}}}} 
\def\DAGHP{{\mathcal{D}}_{{Z^{[\mathP]}}}} 
\def\AGP{A^{{[\mathP]}}} 
\def\AGHP{{Z^{[\mathP]}}} 
\def\OAGIP{{\mathcal{O}}_{A_{[\mathP]}}} 
\def\DAGIP{{\mathcal{D}}_{A_{[\mathP]}}} 
\def\OAGHIP{{\mathcal{O}}_{{Z_{[\mathP]}}}} 
\def\DAGHIP{{\mathcal{D}}_{{Z_{[\mathP]}}}} 
\def\AGIP{A_{[\mathP]}} 
\def\AGHIP{{Z_{[\mathP]}}}
\def\CKT{{\mathcal{T}}}
\def\Max{{{\operatorname{Max}}}}
\def\Per{{{\operatorname{Per}}}}
\def\PerB{{{\operatorname{PerB}}}}
\def\Homeo{{{\operatorname{Homeo}}}}
\def\HA{{{\frak H}_A}}
\def\HB{{{\frak H}_B}}
\def\HSA{{H_{\sigma_A}(X_A)}}
\def\Out{{{\operatorname{Out}}}}
\def\Aut{{{\operatorname{Aut}}}}
\def\Ad{{{\operatorname{Ad}}}}
\def\Inn{{{\operatorname{Inn}}}}
\def\det{{{\operatorname{det}}}}
\def\exp{{{\operatorname{exp}}}}
\def\cobdy{{{\operatorname{cobdy}}}}
\def\Ker{{{\operatorname{Ker}}}}
\def\ind{{{\operatorname{ind}}}}
\def\id{{{\operatorname{id}}}}
\def\supp{{{\operatorname{supp}}}}
\def\co{{{\operatorname{co}}}}
\def\Sco{{{\operatorname{Sco}}}}
\def\ActA{{{\operatorname{Act}_{\DA}(\mathbb{T},\OA)}}}
\def\ActB{{{\operatorname{Act}_{\DB}(\mathbb{T},\OB)}}}
\def\RepOA{{{\operatorname{Rep}(\mathbb{T},\OA)}}}
\def\RepDA{{{\operatorname{Rep}(\mathbb{T},\DA)}}}
\def\RepDB{{{\operatorname{Rep}(\mathbb{T},\DB)}}}
\def\U{{{\mathcal{U}}}}

\section{Introduction}
Suppose $1<N\in \N$.
Let
$A=[A(i,j)]_{i,j=1}^N$
be an $N \times N$ 
irreducible and not any permutation matrix
with entries in $\{0,1\}$.
Throughout the paper, 
we assume that matrix $A$ is  
irreducible and not any permutation.
The Cuntz--Krieger algebra
$\OA$ associated to the matrix $A$
 is a universal purely infinite simple $C^*$-algebra generated by
partial isometries
$S_1,\dots,S_N$
satisfying the relations:
\begin{equation} 
\sum_{j=1}^N S_j S_j^* = 1, \qquad
S_i^* S_i = \sum_{j=1}^N A(i,j) S_jS_j^*
\quad 
\text{ for } i=1,\dots,N. \label{eq:CK}
\end{equation} 
An action of the circle group 
${\mathbb{R}}/\Z = {\mathbb{T}}$
on the algebra $\OA$ is defined by 
the map
$S_i \rightarrow e^{2 \pi\sqrt{-1}t}S_i,
\, i=1,\dots,N$ 
for
 $t \in {\mathbb{R}}/\Z = {\mathbb{T}}$.
It is denoted  by $\rho^A_t$
and  called the gauge action.
Cuntz and Krieger in \cite{CK} have shown that the algebra 
$\OA$ has close relationships with the underlying topological
dynamical system called topological Markov shift.
Let us denote by 
$\bar{X}_A$ the shift space 
\begin{equation}
\bar{X}_A = \{ (x_n )_{n \in \Z} \in \{1,\dots,N \}^{\Z}
\mid
A(x_n,x_{n+1}) =1 \text{ for all } n \in {\Z}
\} \label{eq:twoMarkovshift}
\end{equation}
which is endowed with a relative topology of the product topology
in $\{1,\dots,N \}^{\Z}$, 
so that $\bar{X}_A$ 
is a compact Hausdorff space.
Define the shift transformation $\bar{\sigma}_A$ 
on $\bar{X}_A$
by $\bar{\sigma}_{A}((x_n)_{n \in {\Z}})=(x_{n+1} )_{n \in \Z}$,
which is a homeomorphism on $\bar{X}_A$.
The topological dynamical system $(\bar{X}_A,\bar{\sigma}_A)$
is called the two-sided topological Markov shift for matrix $A$.
The (right) one-sided topological Markov shift 
$(X_A, \sigma_A)$
is similarly defined by the shift space
\begin{equation}
X_A = \{ (x_n )_{n \in \N} \in \{1,\dots,N \}^{\N}
\mid
A(x_n,x_{n+1}) =1 \text{ for all } n \in {\N}
\} \label{eq:oneMarkovshift}
\end{equation}
with the shift transformation
$\sigma_{A}((x_n)_{n \in {\N}})=(x_{n+1} )_{n \in \N}$
on $X_A$, which is a continuous surjection.
 Let us consider the $C^*$-subalgebra 
$\DA$ of $\OA$
 generated by the projections of the form:
$S_{\mu_1}\cdots S_{\mu_n}S_{\mu_n}^* \cdots S_{\mu_1}^*,
\mu_1,\dots, \mu_n =1,\dots,N$.
Let  
$\ell^2(\N)$ 
be the  separable infinite dimensional  Hilbert space.
Let us denote by $\K$ the $C^*$-algebra $\K(\ell^2(\N))$ 
of compact operators on $\ell^2(\N)$ and 
by ${\mathcal{C}}$ its maximal abelian $C^*$-subalgebra consisting of diagonal operators
on $\ell^2(\N)$. 
Cuntz and Krieger have shown in \cite{CK} that the following theorem:
\begin{theorem}[{\cite[3.8 Theorem]{CK}}, cf. \cite{CR}, \cite{Cu3}] \label{thm:CK}
Let $A, B$ be irreducible and non permutation matrices with entries in
$\{0,1\}$.
 Suppose that two-sided topological Markov shifts 
$(\bar{X}_A, \bar{\sigma}_A)$ and 
$(\bar{X}_B, \bar{\sigma}_B)$ 
are topologically conjugate.
Then there exists an isomorphism
$\Phi:\SOA \rightarrow \SOB$ such that 
\begin{equation*}
\Phi(\SDA) = \SDB, \qquad 
\Phi \circ (\rho^{A}_t\otimes \id)
 = (\rho^{B}_t\otimes\id) \circ \Phi \quad
\text{ for }
 t \in \T.
\end{equation*}
\end{theorem}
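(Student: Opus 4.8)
The plan is to reduce the statement to an algebraic condition on the defining matrices and then realize the required isomorphism through an intermediate bipartite algebra. By Williams' classification of shifts of finite type, topological conjugacy of $(\bar{X}_A, \bar{\sigma}_A)$ and $(\bar{X}_B, \bar{\sigma}_B)$ is equivalent to strong shift equivalence of the nonnegative matrices $A$ and $B$ over $\Zp$. Strong shift equivalence is by definition the transitive closure of \emph{elementary} equivalence, so it suffices to treat a single elementary step: there exist nonnegative integer rectangular matrices $R,S$ with
\[
A = RS, \qquad B = SR,
\]
and, composing the isomorphisms obtained for each step of a chain $A=A_0,A_1,\dots,A_k=B$, the general case follows. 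Since the intermediate matrices need not have entries in $\{0,1\}$, from here on I work with $C^*$-algebras of finite directed graphs, of which the $\OA$ of \eqref{eq:CK} is the special case of a graph without multiple edges.

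To an elementary equivalence $A=RS,\ B=SR$ I associate the bipartite matrix
\[
Z = \begin{pmatrix} 0 & R \\ S & 0 \end{pmatrix},
\qquad\text{so that}\qquad
Z^2 = \begin{pmatrix} A & 0 \\ 0 & B \end{pmatrix}.
\]
The graph of $Z$ has vertex set $V_1\sqcup V_2$, with $R$-edges running $V_1\to V_2$ and $S$-edges running $V_2\to V_1$; every edge of $A$ factors uniquely as an $R$-edge followed by an $S$-edge, and every edge of $B$ as an $S$-edge followed by an $R$-edge. Writing $S^Z_e$ for the generator of $\OZ$ attached to an edge $e$, let $P\in\OZ$ be the sum of the vertex projections over $V_1$. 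Then $P$ and $1-P$ are full projections ($A$, $B$, hence $Z$, being irreducible), the corner $P\OZ P$ is isomorphic to $\OA$ carrying $P\DZ P$ onto $\DA$, and $(1-P)\OZ(1-P)$ is isomorphic to $\OB$ carrying $(1-P)\DZ(1-P)$ onto $\DB$.

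The key device for the gauge action is to equip $\OZ$ not with its ordinary gauge action but with the generalized gauge action $\gamma$ that assigns weight $1$ to each $R$-edge and weight $0$ to each $S$-edge, i.e. $\gamma_t(S^Z_e)=e^{2\pi\sqrt{-1}\,t}S^Z_e$ on $R$-edges and $\gamma_t(S^Z_e)=S^Z_e$ on $S$-edges. Under the factorizations above, every edge of $A$ and every edge of $B$ then acquires total weight $1$, so $\gamma$ restricts on $P\OZ P$ to the standard gauge action $\rho^A$ and on $(1-P)\OZ(1-P)$ to $\rho^B$. This asymmetric weighting is exactly what avoids the frequency-doubling that the ordinary gauge action on $\OZ$ would force — an edge of $A$ being a path of length $2$ in $Z$ — since the map $t\mapsto 2t$ is not injective on $\T$. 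Because $\gamma_t$ fixes $P$, it restricts to each corner, and the diagonal $\DZ$ is globally $\gamma$-invariant.

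It remains to pass from the two full corners to a stable isomorphism. Since $\OZ$ is separable and $P,\,1-P$ are full $\gamma$-invariant projections, $P\OZ P$ and $(1-P)\OZ(1-P)$ are each Morita equivalent to $\OZ$, and by the Brown--Green--Rieffel theorem $\SOA\cong\SOZ$ and $\SOB\cong\SOZ$; composing yields $\Phi:\SOA\to\SOB$. The main work, and the principal obstacle, is to arrange these stable isomorphisms so that they \emph{simultaneously} (i) intertwine the stabilized generalized gauge action $\gamma\otimes\id$ with the stabilized standard gauge actions $\rho^A_t\otimes\id$ and $\rho^B_t\otimes\id$, and (ii) carry $\SDA$ and $\SDB$ onto $\SDZ$. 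I would obtain this by constructing the Morita equivalence concretely as an imprimitivity bimodule built from the bipartite graph, whose left and right inner products are visibly compatible with both the gauge gradings and the diagonal masas, and then invoking the equivariant form of Brown--Green--Rieffel, using the $\gamma$-invariance of $P$ and the inclusion $P\DZ P\subset\DZ$ to transport the equivariance and the masa through the isomorphism $P\OZ P\otimes\K\cong\OZ\otimes\K$. The resulting $\Phi$ then satisfies $\Phi(\SDA)=\SDB$ and $\Phi\circ(\rho^{A}_t\otimes\id)=(\rho^{B}_t\otimes\id)\circ\Phi$, and composing over the chain of elementary equivalences completes the proof.
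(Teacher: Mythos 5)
Your overall skeleton is the same as the paper's: Williams' theorem reduces to one elementary equivalence $A=RS$, $B=SR$; one forms the bipartite matrix $Z$, identifies the two corners of $\OZ$ with $\OA$ and $\OB$ together with their diagonals (this is \eqref{eq:4.1}), and connects them through the bipartite structure. Your asymmetric weighting of the gauge action on $\OZ$ is a legitimate variant of how the paper treats the period-doubling (which is in fact harmless: only surjectivity, not injectivity, of $t\mapsto 2t$ is needed; the paper instead uses the ordinary $\rho^Z$ and compensates with a connecting element of gauge weight one). The genuine gap is exactly at the step you yourself call ``the main work'': you invoke an ``equivariant form of Brown--Green--Rieffel'' that would simultaneously intertwine the circle actions on the nose and carry $\SDA$, $\SDZ$, $\SDB$ onto one another. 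No such off-the-shelf theorem exists. Fullness of $P$ gives, via BGR, \emph{some} isomorphism $\SOA\cong\SOZ$, but with no control on the image of the masa; and the available equivariant refinements of Morita-equivalence arguments yield only \emph{cocycle} conjugacy (exterior equivalence) of the stabilized actions, because the isometry implementing $P\otimes 1\sim 1$ cannot in general be chosen to transform by a character of $\T$. This is not a hypothetical worry: the author's earlier paper \cite{MaPre2016} ran essentially this bimodule-plus-stabilization argument and obtained precisely the weaker cocycle-conjugacy statement (see the remark following Theorem \ref{thm:main1}). As written, your proof therefore establishes neither $\Phi(\SDA)=\SDB$ nor the strict identity $\Phi\circ(\rho^{A}_t\otimes\id)=(\rho^{B}_t\otimes\id)\circ\Phi$.

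What closes this gap in the paper is an explicit construction, which is the actual content you would need to supply. Lemma \ref{lem:isomet} produces, for each vertex $I$, a Cuntz family of isometries $s^I_c$, $c\in\E^I$, on $\ell^2(\N)$ which \emph{normalize} the diagonal masa $\C\subset\K$; then $V=\sum_{I}\sum_{c\in\E^I}S_c\otimes s_c^{I*}$, an element of $\OZ\otimes B(\ell^2(\N))\subset M(\OZ\otimes\K)$, satisfies $VV^*=P_A\otimes 1$, $V^*V=P_B\otimes 1$, $V(\SDZ)V^*\subset\SDZ$, $V^*(\SDZ)V\subset\SDZ$, and $(\rho^Z_t\otimes\id)(V)=e^{2\pi\sqrt{-1}t}V$. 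Because $V$ transforms by a character, $\Ad(V^*)$ is exactly $\rho^Z$-equivariant---no cocycle appears---and because the $s^I_c$ normalize $\C$, it carries diagonal to diagonal; its restriction to the corner $P_A\OZ P_A\otimes\K=\SOA$ is the desired $\Phi$, and the computation in Theorem \ref{thm:main1} even upgrades the intertwining to the generalized gauge actions $\rho^{A,\psi(g)}$, $\rho^{B,g}$. Your ``imprimitivity bimodule whose inner products are visibly compatible with the gradings and the masas'' is morally this $V$, but until you exhibit it---in particular the masa-normalizing isometries $s^I_c$, which is where the stabilization by $\K$ genuinely enters---the two key properties of $\Phi$ remain unproved.
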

The proof given in the paper \cite{CK} was based on dynamical method.
Recently, T. M. Carlsen and J. Rout in \cite{CR} have generalized the above result to 
a wider class of graph algebras with more general gauge actions
by using groupoid technique  (cf. \cite{AER}, \cite{CRS}). 
They have also shown that the converse implication
for the assertion of Theorem \ref{thm:CK} also holds.

In the first half of the paper, 
we will give a different proof of Theorem \ref{thm:CK}
from the above Cuntz--Krieger and Carlsen-Rout methods.
Their methods to prove Theorem \ref{thm:CK} were basically
due to topological and groupoid techniques. 
Our method given in this paper is a $C^*$-algebraic one.
The idea of our proof is essentially due to the arguments of 
the author's previous  papers \cite{MaETDS2004},  \cite{MaPre2015},
\cite{MaPre2016}
in which 
imprimitivity bimodules obtained from strong shift equivalence matrices
have been studied. 
Related approaches to ours are seen in 
\cite{EKaKa}, \cite{Bates}, \cite{BP}, \cite{KaKaJFA2014}, \cite{MS},
\cite{MPT}, \cite{Tomforde}, etc.
The key of our proof is to use the following Williams' strong shift equivalence theory
which classifies two-sided topological Markov shifts in terms of the defining matrices.
 (\cite{Williams}).
Two nonnegative matrices $A, B$ are said to be elementary equivalent
if $A = CD, B = DC$ for some
 nonnegative rectangular matrices $C,D$.
R. F. Williams in \cite{Williams}
 introduced the notion of  strong shift equivalence of matrices
in the following way.
Two matrices  $A$ and $B$ are said to be strong shift equivalent
if there exists a finite sequence of nonnegative matrices $A_0,A_1,\dots, A_n$
such that
$A = A_0, B = A_n$ and $A_i$ is elementary equivalent to $A_{i+1}$
such as
$A_i = C_i D_i, \, A_{i+1} = D_i C_i
$
 for 
$i=0,1,\dots, n-1$.  
The situation is written 
\begin{equation}
A \underset{C_1,D_1}{\approx}\cdots \underset{C_{n},D_{n}}{\approx}B. \label{eq:SSE}
\end{equation}
Williams  proved in \cite{Williams} that 
the two-sided topological Markov shifts 
$(\bar{X}_A, \bar{\sigma}_A)$ and 
$(\bar{X}_B, \bar{\sigma}_B)$ 
are topologically conjugate
if and only if the matrices $A$ and $B$ are strong shift equivalent.
 Hence elementary equivalence generates topological conjugacy of two-sided topological Markov shifts. 

The $C^*$-subalgebra $\DA$ of the Cuntz--Krieger algebra $\OA$ is identified with the abelian $C^*$-algebra $C(X_A)$ of the complex valued continuous functions on $X_A$
by regarding the projection
$S_{\mu_1}\cdots S_{\mu_n}S_{\mu_n}^* \cdots S_{\mu_1}^*$
with the characteristic function
$\chi_{U_{\mu_1\cdots \mu_n}} \in C(X_A)$
of the cylinder set
$U_{\mu_1\cdots \mu_n}$
for a word ${\mu_1\cdots \mu_n}$.
We denote by 
$C(X_A, \Z)$ the set of $\Z$-valued continuous functions on $X_A$.
The gauge action on $\OA$ is generalized in the following way (\cite{MaPre2015}).
For $f \in C(X_A, \Z)$,
define an automorphism
$\rho_t^{A,f}$ on $\OA$ for each $t \in \T$ 
by 
\begin{equation}
\rho_t^{A,f}(S_i) = U_t(f) S_i
\qquad i=1,\dots,N \label{eq:rhotf}
\end{equation}
where
$U_t(f), t \in \T= \R/\Z$ 
is defined  by the unitary 
$U_t(f) = \exp({2\pi \sqrt{-1} t f})
$
in $\DA$.
Now 
suppose that
$A$ and $B$ are elementary equivalent such that 
$A = CD$ and $B =DC$ for some nonnegative rectangular matrices $C,D$.
As in \cite[Section 4]{MaPre2015},
we may find canonical
homomorphisms
$\phi:C(X_A,\Z) \rightarrow  C(X_B,\Z)$
and
$\psi:C(X_B,\Z) \rightarrow  C(X_A,\Z)$
of groups such that 
\begin{equation}
(\psi \circ \phi)(f) = f \circ \sigma_A,\qquad
(\phi \circ \psi)(g) = g \circ \sigma_B \label{eq:psiphi}
\end{equation}
for $f \in C(X_A,\Z)$ and $g \in C(X_B,\Z)$.
Cuntz has proved that
there exists an  
isomorphism
$\epsilon_A: K_0(\OA) \rightarrow \Z^N/{(\id - A^{t})\Z^N}$
such that  
$\epsilon_A([1_A]) = [(1,1,\dots,1)]$,
where $1_A$ is the unit of $\OA$
(\cite[3.1 Proposition]{Cu3}).
We will first prove the following theorem.
\begin{theorem}[Theorem \ref{thm:main1} and Theorem \ref{thm:KC}] \label{thm:1.2}
Let $A, B$ be irreducible and non permutation matrices.
Suppose that $A$ and $B$ are elementary equivalent such that
$A = CD$ and $B =DC$
for some nonnegative rectangular matrices $C,D$.
Then there exists an isomorphism
$\Phi:\SOA \rightarrow \SOB$ satisfying 
$\Phi(\SDA) = \SDB$ such that
\begin{equation*}
\Phi  \circ (\rho^{A,\psi(g)}_t\otimes \id)
 = (\rho^{B,g}_t\otimes\id) \circ \Phi
\quad
\text{ for }
g \in C(X_B,\Z), \, 
 t \in \T
\end{equation*}
and the diagram 
$$
\begin{CD}
K_0(\OA) @>\Phi_* >> K_0(\OB) \\
@V{\epsilon_A }VV  @VV{\epsilon_B}V \\
\Z^N/{(\id - A^{t})\Z^N} @> \Phi_{C^t} >> \Z^M/{(\id - B^{t})\Z^M} 
\end{CD}
$$
is commutative,
where $A$ is an $N\times N$ matrix,
$B$ is an $M\times M$ matrix  and
 $\Phi_{C^t}$ is an isomorphism of groups induced by multiplying the matrix
$C^t: \Z^N\longrightarrow \Z^M$. 
\end{theorem}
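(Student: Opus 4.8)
The plan is to realize the elementary equivalence inside a single Cuntz--Krieger algebra attached to the bipartite graph of $C$ and $D$, and then to descend to the two sides by a Morita (full-corner) argument, keeping track of the diagonal and of the gauge actions along the way. Since $A=CD$ is $N\times N$ and $B=DC$ is $M\times M$, the matrix $C$ is $N\times M$ and $D$ is $M\times N$. First I would form the bipartite directed graph with adjacency matrix
$$
Z=\begin{pmatrix} 0 & C \\ D & 0 \end{pmatrix},
\qquad
Z^{2}=\begin{pmatrix} CD & 0 \\ 0 & DC \end{pmatrix}=\begin{pmatrix} A & 0 \\ 0 & B \end{pmatrix},
$$
and let $\OZ$ be its Cuntz--Krieger (graph) algebra. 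Because $A$ and $B$ are irreducible and non permutation, $Z$ is irreducible and not a permutation, so $\OZ$ is simple. Let $p$ be the sum of the vertex projections of $\OZ$ indexed by the first $N$ (``$A$-side'') vertices and $q$ the sum over the remaining $M$ (``$B$-side'') vertices, so that $p+q=1$. Modelling the generators of $\OA$ by the length-two products of $\OZ$ that run $A\to B\to A$ along a $C$-edge followed by a $D$-edge, I would verify the defining relations \eqref{eq:CK} directly from $A=CD$ and the relations for $Z$, obtaining a $*$-isomorphism $p\OZ p\cong\OA$ that carries $\DA$ onto $p\DZ p$; symmetrically $q\OZ q\cong\OB$ carries $\DB$ onto $q\DZ q$.

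Since $\OZ$ is simple, $p$ and $q$ are full projections, so Brown's stabilization theorem gives $(p\OZ p)\otimes\K\cong\OZ\otimes\K\cong(q\OZ q)\otimes\K$; equivalently, the $C$-edges assemble into an $\OA$--$\OB$ imprimitivity bimodule and the Brown--Green--Rieffel theorem yields a stable isomorphism $\Phi:\SOA\to\SOB$. Keeping track of the canonical MASA $\C\subset\K$, I would arrange $\Phi$ to be diagonal preserving, i.e. $\Phi(\SDA)=\SDB$. For the generalized gauge actions the essential point is that, at the level of the canonical diagonals $\DA=C(X_A)$ and $\DB=C(X_B)$, this diagonal-preserving $\Phi$ transports a $\Z$-valued function $g$ on $X_B$ to $\psi(g)$ on $X_A$, where $\phi,\psi$ are the homomorphisms of \cite[Section 4]{MaPre2015} satisfying \eqref{eq:psiphi}. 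Because $\rho^{A,f}_t$ is implemented by the diagonal unitary $\exp(2\pi\sqrt{-1}t f)$ of \eqref{eq:rhotf}, the desired intertwining $\Phi\circ(\rho^{A,\psi(g)}_t\otimes\id)=(\rho^{B,g}_t\otimes\id)\circ\Phi$ then reduces to this transport rule. This step is where I expect the main difficulty to lie: the Brown isomorphism is canonical only up to unitary equivalence, and pinning it down so that it simultaneously respects the MASA and intertwines the entire family of generalized gauge actions with precisely the $\psi$-twist is the delicate heart of the argument.

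Finally, for the commuting square I would use Cuntz's isomorphism $\epsilon_A:K_0(\OA)\to\Z^N/(\id-A^{t})\Z^N$ sending the vertex class $[S_iS_i^*]$ to the class of the $i$-th standard basis vector. Tracing an $A$-side vertex projection through the $C$-edge bimodule decomposes it, with multiplicities $C(i,\cdot)$, into $B$-side vertex projections, so on the quotient groups the induced map $\Phi_*$ becomes multiplication by $C^{t}$. The identity $C^{t}(\id-A^{t})=(\id-B^{t})C^{t}$, which follows from $A^{t}=D^{t}C^{t}$ and $B^{t}=C^{t}D^{t}$, shows that $C^{t}$ descends to a well-defined homomorphism $\Phi_{C^t}$ and makes the square commute; its bijectivity is then inherited from that of $\Phi_*$. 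The K-theory computation is essentially mechanical once the bimodule is in hand, so it is the gauge-equivariance of the second step, rather than this last one, that constitutes the principal obstacle.
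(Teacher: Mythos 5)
Your setup is exactly the paper's: the bipartite matrix $Z$, the corner identifications $P_A\OZ P_A=\OA$, $P_B\OZ P_B=\OB$ of \eqref{eq:4.1}, and the bimodule $P_A\OZ P_B$. But at the decisive step you invoke Brown / Brown--Green--Rieffel stabilization as a black box and then say you ``would arrange'' the resulting $\Phi$ to preserve the diagonal and to intertwine the generalized gauge actions, yourself flagging this as ``the delicate heart of the argument.'' That flagged step is a genuine gap, not a detail: Brown's theorem produces \emph{some} partial isometry in $M(\OZ\otimes\K)$ with no control whatsoever over the masa $\SDZ$ or the gauge action, and no abstract uniqueness statement recovers that control (the isomorphism is only determined up to composition with automorphisms, which need not respect either structure). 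Worse, your proposed reduction of the intertwining to a ``transport rule'' on the diagonals cannot work even in principle: every generalized gauge action $\rho^{B,g}$, $g\in C(X_B,\Z)$, restricts to the \emph{identity} on $\DB$ (conjugation by the diagonal unitary $U_t(g)$ fixes the abelian algebra $\DB$ pointwise), so the restriction of $\Phi$ to the diagonals carries no information about which of these mutually distinct actions gets intertwined with which; the identity \eqref{eq:Phirho1} is a statement about how $\Phi$ moves the generators $S_{a_i}$, not about what it does to $\DA$.

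What fills this hole in the paper is an explicit, hands-on realization of the Brown partial isometry adapted to the graph: Lemma \ref{lem:isomet} produces, for each vertex $I\in\V_{C,D}$, a Cuntz family of isometries $s^I_c$, $c\in\E^I$, on $\ell^2(\N)$ normalizing the diagonal $\C$ as in \eqref{eq:sc}, and one sets $V=\sum_{I\in\V_{C,D}}\sum_{c\in\E^I}S_c\otimes s_c^{I*}\in\OZ\otimes B(\ell^2(\N))\subset M(\OZ\otimes\K)$ as in \eqref{eq:V}. A direct check gives $VV^*=P_A\otimes1$, $V^*V=P_B\otimes1$, that $V$ and $V^*$ normalize the diagonal, and $(\rho^Z_t\otimes\id)(V)=e^{2\pi\sqrt{-1}t}V$; then $\Phi=\Ad(V^*)$ restricted to the corner has all the required properties simultaneously, and \eqref{eq:Phirho1} follows from the generator-by-generator computation in Theorem \ref{thm:main1}, whose key point is $S_c^*\psi(g)S_c=g\,S_c^*S_c$. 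The same explicitness is what licenses your final K-theory paragraph: the paper computes $\Phi(S_{a_i}S_{a_i}^*\otimes p_1)$ directly (Proposition \ref{prop:KTD}) and obtains multiplication by $\hat{D}^t$ in the \emph{edge-graph} coordinates of $A^G,B^G$ (recall that $\OA$ means ${\mathcal{O}}_{A^G}$ here, since $A$ may have entries larger than $1$), and then needs Lemma \ref{lem:Matuilemma} to convert $\Phi_{\hat{D}^t}$ into $\Phi_{C^t}$ in the vertex coordinates of $A,B$. Your sketch, which works with vertex projections and multiplicities $C(i,\cdot)$, skips this edge-to-vertex translation entirely and, more fundamentally, cannot be carried out at all until $\Phi$ itself has been pinned down concretely.
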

As a direct corollary of the above theorem, 
we obtain  Theorem \ref{thm:CK}. 
Since any $C^*$-algebraic proof of Theorem \ref{thm:CK} has not been known until now,
the proof of Theorem \ref{thm:main1} given in this paper confirms Theorem \ref{thm:CK} in terms of $C^*$-algebras
(cf. \cite{MaETDS2004}, \cite{MaPre2015}, \cite{MaPre2016}). 
Moreover, as in the following corollary, the K-theoretic behavior of the isomorphism 
$\Phi$ can be described in terms of the matrices
$C_1,\dots,C_n$ connecting $A$ and $B$. 
\begin{corollary}
\label{cor:main}
Let $A, B$ be irreducible and non permutation matrices.
Suppose that $A$ and $B$ are strong shift equivalent such that
$A \underset{C_1,D_1}{\approx}\cdots \underset{C_{n},D_{n}}{\approx}B
$
for some nonnegative rectangular matrices
$C_i, D_i, i=1,2,\dots,n.$
Then there exists an isomorphism
$\Phi:\SOA \rightarrow \SOB$ satisfying 
$\Phi(\SDA) = \SDB$ such that
\begin{equation*}
\Phi  \circ (\rho^{A}_t\otimes \id)
 = (\rho^{B}_t\otimes\id) \circ \Phi
\quad
\text{ for }
 t \in \T
\end{equation*}
and the diagram 
$$
\begin{CD}
K_0(\OA) @>\Phi_* >> K_0(\OB) \\
@V{\epsilon_A }VV  @VV{\epsilon_B}V \\
\Z^N/{(\id - A^{t})\Z^N} @> \Phi_{{(C_1\cdots C_n)}^t} >> \Z^M/{(\id - B^{t})\Z^M} 
\end{CD}
$$
is commutative,
where $A$ is an $N\times N$ matrix,
$B$ is an $M\times M$   and
$\Phi_{{(C_1\cdots C_n)}^t}$ is an isomorphism induced by multiplying the matrix
${(C_1\cdots C_n)}^t: \Z^N\longrightarrow \Z^M$. 
\end{corollary}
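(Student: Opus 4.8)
The plan is to iterate Theorem \ref{thm:1.2} along the given chain of elementary equivalences and then compose the resulting isomorphisms. Write $A_0 = A, A_1, \dots, A_n = B$ for the intermediate matrices, so that $A_{i-1} = C_i D_i$ and $A_i = D_i C_i$ for $i = 1, \dots, n$, and let $N_i$ denote the size of $A_i$ (so $N_0 = N$ and $N_n = M$). Applying Theorem \ref{thm:1.2} to the $i$-th elementary equivalence produces an isomorphism $\Phi_i : \mathcal{O}_{A_{i-1}}\otimes\K \to \mathcal{O}_{A_i}\otimes\K$ with $\Phi_i(\mathcal{D}_{A_{i-1}}\otimes\C) = \mathcal{D}_{A_i}\otimes\C$, intertwining the generalized gauge actions through the canonical homomorphisms $\psi_i : C(X_{A_i},\Z)\to C(X_{A_{i-1}},\Z)$, and fitting into the K-theory square with the induced isomorphism $\Phi_{C_i^t}$. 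I then set $\Phi := \Phi_n\circ\cdots\circ\Phi_1 : \SOA \to \SOB$. Preservation of the diagonal is immediate, since each $\Phi_i$ carries $\mathcal{D}_{A_{i-1}}\otimes\C$ onto $\mathcal{D}_{A_i}\otimes\C$.

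For the K-theory diagram I would stack the $n$ commuting squares supplied by Theorem \ref{thm:1.2}. On the bottom row each step is multiplication by $C_i^t : \Z^{N_{i-1}}\to\Z^{N_i}$, which is well defined on the quotients because $C_i^t(\id - A_{i-1}^t) = (\id - A_i^t)C_i^t$ (using $A_{i-1}^t = D_i^t C_i^t$ and $A_i^t = C_i^t D_i^t$). Composing, $\Phi_* = (\Phi_n)_*\circ\cdots\circ(\Phi_1)_*$ corresponds on the quotient groups to the product of isomorphisms $\Phi_{C_n^t}\circ\cdots\circ\Phi_{C_1^t}$, which is multiplication by $C_n^t\cdots C_1^t = (C_1\cdots C_n)^t$; the reversal of order is absorbed by the transpose. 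This is exactly the claimed map $\Phi_{(C_1\cdots C_n)^t}$, so the outer square commutes.

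The remaining, and most delicate, point is the gauge action. Chaining the intertwining relations of Theorem \ref{thm:1.2}, starting from $g = 1_B$ on the $B$-side and pushing the function back one matrix at a time, gives
\begin{equation*}
\Phi\circ\bigl(\rho^{A,\,\psi_1\psi_2\cdots\psi_n(1_B)}_t\otimes\id\bigr) = \bigl(\rho^{B,1_B}_t\otimes\id\bigr)\circ\Phi,\qquad t\in\T.
\end{equation*}
Since the ordinary gauge actions are $\rho^{B} = \rho^{B,1_B}$ and $\rho^{A} = \rho^{A,1_A}$, it remains to identify $\psi_1\psi_2\cdots\psi_n(1_B)$ with the constant function $1_A$. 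The key fact, read off from the explicit construction of the canonical homomorphisms in \cite[Section 4]{MaPre2015}, is that each $\psi_i$ sends the constant function $1$ to the constant function $1$; equivalently, the length cocycle is preserved under elementary equivalence (note that the abstract relations \eqref{eq:psiphi} alone only give $\psi_i\phi_i(1)=1$, so one genuinely needs the construction). Granting this, $\psi_1\cdots\psi_n(1_B) = 1_A$ and the displayed relation collapses to $\Phi\circ(\rho^A_t\otimes\id) = (\rho^B_t\otimes\id)\circ\Phi$, completing the proof. I expect this last identification to be the main obstacle: the formal composition of the bimodules and of the K-theory squares is routine, whereas checking that the accumulated generalized gauge function is the ordinary one — that no nontrivial cocycle is picked up along the chain — requires the precise form of the $\psi_i$ rather than only their abstract intertwining properties.
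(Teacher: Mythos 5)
Your proposal is correct and takes essentially the same approach as the paper: compose the per-step isomorphisms supplied by Theorem \ref{thm:1.2} (Theorem \ref{thm:main1} and Theorem \ref{thm:KC}) along the chain of elementary equivalences, and stack the $n$ K-theory squares so that the bottom maps multiply to $C_n^t\cdots C_1^t=(C_1\cdots C_n)^t$. The only difference is cosmetic: the paper's Theorem \ref{thm:main1} already records the plain-gauge intertwining \eqref{eq:Phirho} as an ``in particular'' clause, to be cited at each step, whereas you re-derive it from the generalized clause \eqref{eq:Phirho1} by checking $\psi_i(1)=1$ --- which is exactly the observation underlying that clause, since $\psi_i(1)=\sum_{c\in\E_C}S_cS_c^{*}=P_A$ is the unit of the relevant corner.
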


In his proof of the Williams's theorem, he has introduced 
a key procedure for constructing new directed graphs from an original directed graph.
He has then shown that the underlying matrices are connected  by a finite sequence
of the directed graphs obtained by the procedures for topologically conjugate topological Markov shifts (\cite{Williams}, cf. \cite{Kitchens}, \cite{LM}).
The key procedure is called state splitting. 
The converse procedure is called state amalgamation.
The transition matrices obtained by state splitting
are strong shift equivalent.
In the second half of the paper,  
we  will examine our construction of the isomorphism
$\Phi: \SOA\longrightarrow \SOB$ in Theorem \ref{thm:1.2} and 
Corollary \ref{cor:main} for state splitting matrices. 
Let $G=(\V,\E)$ be the directed graph for the given nonnengative matrix $A$.
$\AGP$is the transition matrix of the out-split graph of  $G$ 
by a partition $\mathP$ of the edges $\E$.
Then there exists an isomorphism
$\Phi^{[\mathP]}:\OA \rightarrow \OAGP$ satisfying 
$\Phi^{[\mathP]}(\DA) = \DAGP$ such that
\begin{equation}
\Phi^{[\mathP]}  \circ \rho^{A}_t
 = \rho^{\AGP}_t \circ \Phi^{[\mathP]}
\quad
\text{ for } 
 t \in \T. \label{eq:PhirhoOut1}
\end{equation}
(Theorem \ref{thm:mainout}).
In the cases of in-splitting we need to stabilize the algebras $\OA$ and $\OB$
to obtain a similar result to the above equality \eqref{eq:PhirhoOut1}
(Theorem \ref{thm:mainin}).
We call the triplet
$(\OA,\DA,\rho^A)$
the Cuntz--Krieger triplet and write it 
$\CKT_A$.
It has been proved that 
$\CKT_A$ and $\CKT_B$ are isomorphic
 if and only if 
their underlying one-sided topological Markov shifts
$(X_A,\sigma_A)$ and
$(X_B,\sigma_B)$
are eventually conjugate
(\cite{MaPre2015}, \cite{MaPAMS2016}). 
We denote by
$\bar{\CKT}_A$
the pair 
$(\CKT_{A^t}, \CKT_A)$
of the Cuntz--Krieger triplets.
Since  
an in-split graph is obtained by out-splitting of  the transposed graph,   
it seems to be reasonable to say that  
$\bar{\CKT}_A$
and
$\bar{\CKT}_B$
are  {\it transpose free isomorphic in }$1$-{\it step}
if $\CKT_A$ and $\CKT_B$ are isomorphic
or
$\CKT_{A^t}$ and $\CKT_{B^t}$ are isomorphic.
More generally 
we say that  
$\bar{\CKT}_A$
and
$\bar{\CKT}_B$
are  {\it transpose free isomorphic}
if they are connected by an equivalence relation generated by 
 transpose free isomorphisms in $1$-step.
We then prove  the following theorem.
\begin{theorem}[{Theorem \ref{thm:transfree}}]
Suppose that $A$ and $B$ are nonnegative irreducible matrices.
Then two-sided topological Markov shifts 
$(\bar{X}_A,\bar{\sigma}_A)$ and $(\bar{X}_B,\bar{\sigma}_B)$
are topologically conjugate 
if and only if
$\bar{\CKT}_A$
and
$\bar{\CKT}_B$
are transpose free isomorphic.
\end{theorem}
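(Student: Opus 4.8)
The plan is to route both implications through Williams' description of topological conjugacy by state splittings, using the two transpose branches built into the definition of transpose free isomorphism. The guiding principle is that the isomorphism produced by out-splitting in Theorem \ref{thm:mainout} is \emph{unstabilized} (see \eqref{eq:PhirhoOut1}), so it realizes an honest isomorphism of Cuntz--Krieger triplets $\CKT_A \cong \CKT_{\AGP}$; an in-splitting, which only yields an unstabilized isomorphism after passing to transposes, is absorbed by the second branch $\CKT_{A^t} \cong \CKT_{B^t}$ of the $1$-step relation.

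First I would recall, after Williams \cite{Williams} (cf. \cite{LM}, \cite{Kitchens}), that $(\bar{X}_A, \bar{\sigma}_A)$ and $(\bar{X}_B, \bar{\sigma}_B)$ are topologically conjugate if and only if $A$ and $B$ are connected by a finite sequence of out-splittings, in-splittings and their inverse amalgamations. For the direction $(\Rightarrow)$ it then suffices to treat one splitting step $A \to B$. If $B = \AGP$ is an out-split of $A$ by a partition $\mathP$, Theorem \ref{thm:mainout} gives an isomorphism $\Phi^{[\mathP]}: \OA \to \OAGP$ with $\Phi^{[\mathP]}(\DA) = \DAGP$ and $\Phi^{[\mathP]}\circ \rho^A_t = \rho^{\AGP}_t \circ \Phi^{[\mathP]}$, that is $\CKT_A \cong \CKT_B$, so $\bar{\CKT}_A$ and $\bar{\CKT}_B$ are transpose free isomorphic in $1$-step. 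If instead $B$ is an in-split of $A$, then, as noted in the excerpt, $B^t$ is an out-split of $A^t$; applying Theorem \ref{thm:mainout} to the transposed graphs yields $\CKT_{A^t} \cong \CKT_{B^t}$, again a $1$-step transpose free isomorphism, now through the transpose branch. Amalgamations are the inverse moves and are handled symmetrically. Chaining these $1$-step equivalences along Williams' sequence shows that $\bar{\CKT}_A$ and $\bar{\CKT}_B$ are transpose free isomorphic.

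For $(\Leftarrow)$ I would unwind the equivalence relation: a transpose free isomorphism is a chain $\bar{\CKT}_A = \bar{\CKT}_{A_0}, \dots, \bar{\CKT}_{A_m} = \bar{\CKT}_B$ in which each consecutive pair is transpose free isomorphic in $1$-step, hence either $\CKT_{A_i} \cong \CKT_{A_{i+1}}$ or $\CKT_{A_i^t} \cong \CKT_{A_{i+1}^t}$. In the first case, tensoring the triplet isomorphism with $\id$ on $\K$ produces an isomorphism ${\mathcal O}_{A_i}\otimes\K \cong {\mathcal O}_{A_{i+1}}\otimes\K$ carrying ${\mathcal D}_{A_i}\otimes\C$ onto ${\mathcal D}_{A_{i+1}}\otimes\C$ and intertwining $\rho^{A_i}_t\otimes\id$ with $\rho^{A_{i+1}}_t\otimes\id$; by the converse of Theorem \ref{thm:CK} due to Carlsen--Rout \cite{CR} this forces $(\bar{X}_{A_i}, \bar{\sigma}_{A_i})$ and $(\bar{X}_{A_{i+1}}, \bar{\sigma}_{A_{i+1}})$ to be topologically conjugate. (Intrinsically, $\CKT_{A_i} \cong \CKT_{A_{i+1}}$ says the one-sided shifts are eventually conjugate by \cite{MaPre2015}, \cite{MaPAMS2016}, and eventual conjugacy of the one-sided shifts implies two-sided conjugacy.) In the second case the same argument gives two-sided conjugacy of $(\bar{X}_{A_i^t}, \bar{\sigma}_{A_i^t})$ and $(\bar{X}_{A_{i+1}^t}, \bar{\sigma}_{A_{i+1}^t})$; since transposing $A = C_1 D_1,\ A_1 = D_1 C_1,\dots$ transposes the entire strong shift equivalence \eqref{eq:SSE}, two-sided conjugacy is transpose invariant by Williams \cite{Williams}, and we again obtain conjugacy of $(\bar{X}_{A_i}, \bar{\sigma}_{A_i})$ and $(\bar{X}_{A_{i+1}}, \bar{\sigma}_{A_{i+1}})$. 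Composing these conjugacies along the chain gives topological conjugacy of $(\bar{X}_A, \bar{\sigma}_A)$ and $(\bar{X}_B, \bar{\sigma}_B)$.

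The step I expect to be delicate is the in-splitting case: Theorem \ref{thm:mainin} only provides an isomorphism after stabilization and so does not by itself produce a $1$-step isomorphism of the \emph{unstabilized} triplets. The entire purpose of the transpose free device is to sidestep this, by rewriting an in-split as an out-split of the transposed graph and thereby recovering an unstabilized isomorphism at the cost of passing to $\CKT_{A^t}$. Making the bookkeeping precise---verifying that Williams' reduction can be taken through splittings of exactly this form, that the transpose branch is compatible with amalgamations, and that the transpose invariance of strong shift equivalence matches the transpose branch used in $(\Leftarrow)$---is where the care lies, whereas the $C^*$-algebraic inputs (Theorem \ref{thm:mainout} and the converse of Theorem \ref{thm:CK}) are invoked as black boxes.
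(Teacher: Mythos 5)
Your proposal is correct, and its skeleton coincides with the paper's: the forward direction is exactly the paper's argument (Williams' reduction of a two-sided conjugacy to a chain of out-splittings, in-splittings and their amalgamations, Theorem \ref{thm:mainout} for the out-split steps, and the observation that an in-split of $G_{A_i}$ is an out-split of the transposed graph, giving $\CKT_{A_i^t}\cong\CKT_{A_{i+1}^t}$), and the converse is the same step-by-step unwinding of the chain \eqref{eq:transiso}. Where you deviate from the paper is in how the two sub-claims of the converse are justified. For a step with $\CKT_{A_i}\cong\CKT_{A_{i+1}}$, your primary route tensors the triplet isomorphism with $\K$ and quotes the Carlsen--Rout converse of Theorem \ref{thm:CK}; the paper stays unstabilized and uses the equivalence of triplet isomorphism with eventual conjugacy of the one-sided shifts (\cite{MaJOT2015}) together with the fact that one-sided eventual conjugacy implies two-sided conjugacy --- precisely the route you mention parenthetically. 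For a step with $\CKT_{A_i^t}\cong\CKT_{A_{i+1}^t}$, you transfer conjugacy from the transposed shifts to the original ones by a round-trip through Williams' theorem (conjugacy of the transposes $\Rightarrow$ strong shift equivalence of the transposes $\Rightarrow$ strong shift equivalence of the originals, by transposing each elementary equivalence $\Rightarrow$ conjugacy of the originals); the paper instead argues dynamically: after identifying $\bar{X}_{A^t}$ with $\bar{X}_A$ by reversing sequences, $\bar{\sigma}_{A^t}$ is the inverse of $\bar{\sigma}_A$, and a homeomorphism conjugating two homeomorphisms also conjugates their inverses. Both of your substitutions are valid on the strength of results the paper itself cites, so there is no gap; the paper's choices are somewhat lighter (no groupoid-based input from \cite{CR}, no second pass through Williams' theorem), while yours has the organizational advantage of reusing, in the converse, the same black boxes already invoked in the forward direction.
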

This gives a  characterization 
of topologically 
conjugate two-sided topological Markov shift in terms of the Cuntz--Krieger
triplets without stabilization. 
\section{Strong shift equivalence }
In what follows, we suppose that  $A =[A(i,j)]_{i,j=1}^N $ 
is an $N\times N$ matrix with entries in nonnegative integers.
Let us consider $N$ vertices
$\{ I^A_1, \dots, I_N^A\}$
which is denoted by $\V_A$.
We consider 
$A(i,j)$ directed edges  from $I_i^A$ to $I_j^A$.
The set of edges is denoted by
$\E_A$.
We then have a 
directed graph 
$G_A = (\V_A,\E_A)$ for the matrix $A$.
For an directed edge
$a_j \in \E_A$,
its target vertex and source vertex are denoted by
   $t(a_i),  s(a_i)$, respectively.
We then have the associated matrix 
$A^G$ with entries in $\{0,1\}$
by setting
\begin{equation}
A^G(i, j) =
\begin{cases} 
 1 &  \text{  if  } t(a_i) = s(a_j), \\
 0 & \text{  otherwise}
\end{cases} \label{eq:AG}
\end{equation}
for $i,j =1,\dots,N_A$,
which expresses the transition of  the directed edges
of $\E_A$. 
The topological Markov shifts $(\bar{X}_A,\bar{\sigma}_A)$ and
$(X_A,\sigma_A)$ for the nonnegative matrix $A$ 
are defined as those of 
 $(\bar{X}_{A^G},\bar{\sigma}_{A^G})$ and
$(X_{A^G},\sigma_{A^G})$, respectively. 
The Cuntz--Krieger algebra $\OA$ for the matrix $A$  
 is defined as the Cuntz--Krieger algebra
 ${\mathcal{O}}_{A^G}$ for the matrix $A^G$ 
which is the universal $C^*$-algebra generated by 
partial isometries
$S_{a_i}$ indexed by edges $a_i, i=1,\dots, N_A$ subject to the relations
\eqref{eq:CK} for the matrix $A^G$ instead of $A$.

Let us assume that nonnegative irreducible matrices $A, B$ 
 are elementary equivalent so that
$A = CD$ and $B=DC$
for some  nonnegative rectangular matrices $C, D$
such that 
$C$ is an $N \times M$ matrix and 
$D$ is an $M \times N$ matrix, respectively. 
 We set the $(N+M)\times(N+ M)$ matrix
$
Z =
\begin{bmatrix}
0 & C \\
D & 0
\end{bmatrix}.
$
The directed graph
$G_Z =(\V_Z,\E_Z)$
for the matrix $Z$ 
is a bipartite graph such that 
$\E_Z = \E_C\cup \E_D$
where 
$\E_C, \E_D$ are the edges corresponding to the matrix entries of
$C, D$ respectively.
As
$A= CD$ (resp. $B=DC$),
the edge set $\E_A$ (resp. $\E_B$)
is identified with a subset 
of the pairs  of edges
 $\E_C$ (resp. $\E_D$) and $\E_D$ (resp. $\E_C$). 
Hence 
we identify
 an edge $a$ of $\E_A$ with a pair $c(a) d(a)$ of
edges $c(a) \in \E_C$ and $d(a)\in \E_D$.
Similarly 
we identify
 an edge $b$ of $\E_B$ with a pair $d(b) c(b)$ of
edges $d(b) \in \E_D$ and $c(b)\in \E_C$.

We will consider  the Cuntz--Krieger algebra 
$\OZ$
for the nonnegative matrix $Z$.
The canonical generating partial isometries 
are denoted by
$ S_c, S_d, c \in \E_C, d \in \E_D$ 
which are indexed by edges of $\E_C$ and of $\E_D$
satisfying the following relations
\begin{gather*}
\sum_{c \in \E_C} S_c S_c^*
+
\sum_{d \in \E_D} S_d S_d^*
=1, \\
S_c^* S_c = \sum_{d \in \E_D}Z(c,d) S_d S_d^*, \qquad
S_d^* S_d = \sum_{c \in \E_C}Z(d,c) S_c S_c^*
\end{gather*}
for $c \in \E_C, d \in \E_D$.
Put  the projections
$P_A$ and $P_B$ 
 in $\OZ$ by 
$P_A = \sum_{c \in \E_C} S_c S_c^*$
and
$P_B = \sum_{d \in \E_D} S_d S_d^*$
so that $P_A + P_B =1$.
Under the identifications
between 
$\E_A$ (resp. $\E_B$)
and
$\{ c(a) d(a) \in \E_C \E_D \mid a \in \E_A\}$
(resp. $\{ d(b) c(b) \in \E_D\E_C \mid b \in \E_B\}$),
we write
$S_{cd} = S_a$ (resp. $S_{dc} = S_b$)
where $S_{cd}$ denotes $S_c S_d$ (resp. $S_{dc}$ denotes $S_d S_c$)
if 
$c=c(a), d=d(a)$ 
(resp.
$d=d(b), c=c(b)$).
The $C^*$-subalgebra
\begin{align*}
& C^*(S_a : a = cd \text{ for some } c \in C, d \in D ) \\
(\text{resp. } & C^*(S_b : b = dc \text{ for some } d \in D, c \in C ) )
\end{align*}
of $\OZ$ coincides with 
$\OA$ (resp. $\OB$).
By \cite{MaETDS2004} (cf. \cite{MaPre2015}),
we know that
\begin{equation}
P_A \OZ P_A = \OA, \qquad P_B \OZ P_B = \OB,
\qquad
 \DZ P_A = \DA, \qquad  \DZ P_B = \DB. \label{eq:4.1}
\end{equation}
As in \cite[Lemma 3.1]{MaETDS2004},
  $P_A \OZ P_B$ has a structure of 
 $\OA-\OB$-imprimitivity bimodule in a natural way (cf. \cite{Brown}, \cite{BGR}, \cite{Rieffel1}).

Let us denote by $\{ \delta_j\}_{j \in \N}$ the complete orthonormal basis
of the Hilbert space 
$\ell^2(\N)$
defined by
\begin{equation}
\delta_j(m)
=
\begin{cases}
1 & \text{ if } j=m, \\
0 & \text{ otherwise, }
\end{cases} \label{eq:delta}
\end{equation}
for $m \in \N$.
Let us denote by $e_{i,j}, i,j\in \N$ the matrix units on $\ell^2(\N)$
such that $e_{i,j}\delta_j = \delta_i$.
Recall that the $C^*$-algebra generated by all of them is denoted by $\K$
which is the $C^*$-algebra of compact operators on $\ell^2(\N)$.
Its multiplier algebra $M(\K)$ is the $C^*$-algebra $B(\ell^2(\N))$
of bounded linear operators on $\ell^2(\N)$.
We denote by $\C$ the $C^*$-subalgebra of $\K$ generated by the diagonal operators
$e_{i,i}, i \in \N$.
Since the graph $G_Z =(\V_Z,\E_Z)$ is bipartite,
we have
$\E_Z = \E_C\cup \E_D$ and
the vertex set $\V_Z$ is decomposed into $\V_{C,D}\cup\V_{D,C}$
such that 
\begin{align*}
\V_{C,D} &= \{ I \in \V_Z \mid t(c) =I \text{ for some } c \in \E_C\}, \\
\V_{D,C} &= \{ I \in \V_Z \mid t(d) =I \text{ for some } d \in \E_D\}.
\end{align*} 
In what follows, we denote 
by $\E$ the edge set $\E_Z$,
and
by $\V$ the vertex set $\V_Z$,
respectively.
For a vertex $I\in \V$,
let us denote by $\E^I$ (resp. $\E_I$) the set of edges in $\E$ 
whose terminals (resp. sources) are $I$, that is,
\begin{equation*}
\E^I = \{ e \in \E \mid t(e) = I\}, \qquad
\E_I = \{ e \in \E \mid s(e) = I\}.
\end{equation*}
\begin{lemma}\label{lem:isomet}
For a fixed $I \in \V_{C,D}$, we may assign a family $s_c, c \in \E^I $ of isometries on the Hilbert space $\ell^2(\N)$ such that 
\begin{equation}
\sum_{c \in \E^I}s_c s_c^* = 1,
\qquad
s_c^* s_c = 1
\quad
\text{ and }
\quad
s_c \C s_c^* \subset \C,
\qquad
s_c^* \C s_c \subset \C
\quad
\text{ for } c \in \E^I. \label{eq:sc}
\end{equation}
\end{lemma}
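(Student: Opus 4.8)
The plan is to realize the required family as basis-shuffling operators determined by injections of the index set $\N$, since operators of this monomial form automatically normalize the diagonal algebra $\C$. First I would note that, because the matrices $C,D$ have entries in nonnegative integers and hence $G_Z$ is a finite graph, the set $\E^I$ of edges terminating at $I$ is finite; moreover it is nonempty, as $I \in \V_{C,D}$ means $I = t(c)$ for some $c \in \E_C$. The only set-theoretic input is then the elementary fact that $\N$ admits a partition $\N = \bigsqcup_{c \in \E^I} N_c$ into $|\E^I|$ mutually disjoint infinite subsets. For each $c \in \E^I$ I would fix a bijection $\theta_c : \N \to N_c$ and define a bounded operator $s_c$ on $\ell^2(\N)$ by $s_c \delta_m = \delta_{\theta_c(m)}$ for $m \in \N$.

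Next I would verify \eqref{eq:sc} directly. Since each $\theta_c$ is injective, one computes $s_c^* s_c \delta_m = \delta_m$, so $s_c^* s_c = 1$ and each $s_c$ is an isometry. The range projection $s_c s_c^*$ is the orthogonal projection onto $\overline{\operatorname{span}}\{\delta_n \mid n \in N_c\}$; because the family $\{N_c\}_{c \in \E^I}$ partitions $\N$, these projections are mutually orthogonal and sum to the identity, whence $\sum_{c \in \E^I} s_c s_c^* = 1$. For the two normalizer conditions I would compute on the matrix units $e_{m,m}$ generating $\C$: one has $s_c e_{m,m} s_c^* = e_{\theta_c(m),\theta_c(m)} \in \C$, while $s_c^* e_{n,n} s_c$ equals $e_{m,m}$ when $n = \theta_c(m)$ and $0$ otherwise, so in either case it lies in $\C$. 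Passing to norm-closed linear spans yields $s_c \C s_c^* \subset \C$ and $s_c^* \C s_c \subset \C$.

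I do not expect a genuine obstacle here, since the construction is completely explicit and the verifications are routine. The entire content of the lemma lies in the \emph{selection} of the $s_c$ from among all admissible Cuntz-type isometries: a generic family of isometries with orthogonal ranges summing to $1$ need not normalize $\C$. The single point requiring attention is therefore to keep each $s_c$ of monomial form, i.e.\ carrying canonical basis vectors to canonical basis vectors, and to track the diagonal carefully through the conjugations $s_c (\cdot) s_c^*$ and $s_c^* (\cdot) s_c$.
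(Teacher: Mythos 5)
Your proposal is correct and takes essentially the same approach as the paper: the paper's explicit choice $s_{c_i}\delta_j = \delta_{k(j-1)+i}$ is precisely your construction with $N_{c_i}$ taken to be the residue classes modulo $k=|\E^I|$ and $\theta_{c_i}(j)=k(j-1)+i$. You merely phrase the partition of $\N$ abstractly and write out the routine verifications (isometry, ranges summing to $1$, conjugation of matrix units) that the paper dismisses as easy to see.
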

\begin{proof}
Suppose that
$\E^I = \{c_1,\cdots,c_k\}$.
If $k=1$, one may take $s_c =1$.
Suppose $k \ge 2$.
Let $\delta_j, j \in \N$ be the complete orthonormal basis of $\ell^2(\N)$ 
defined by \eqref{eq:delta}.
Define operators $s_{c_i}$ on $\ell^2(\N)$ by
setting
\begin{equation}
s_{c_i}\delta_j = \delta_{k(j-1) +i }, \qquad i=1,\dots,k, \,\,  j\in \N.
\end{equation}
It is easy to see that the operators $s_{c_i}$ on $\ell^2(\N)$
have the desired properties. 
\end{proof}
For each vertex $I \in \V_{C,D}$,
take a family of isometries
$s^I_c, c \in \E^I$ having the properties \eqref{eq:sc}
and put 
\begin{equation*}
V^I_c = S_c \otimes s_c^{I*}\quad
\text{ in }
\quad
\OZ\otimes B(\ell^2(\N))
\quad
\text{ for }
 \quad c \in \E^I
\end{equation*}
and define the operator $V$ by setting
\begin{equation}
V = \sum_{I \in \V_{C,D}} \sum_{c \in \E^I} V_c^I
 (= \sum_{c \in \E_C} S_c \otimes s_c^{I*}) \label{eq:V}
\end{equation}
which belongs to $\OZ \otimes B(\ell^2(\N))$.
We note that 
for $c, c' \in \E_C$, 
the operator $S_c S_{c'}^* \ne 0$ if and only if 
$S_c^*S_c S_{c'}^*S_{c'} \ne 0.$
As the latter condition is equivalent to the condition that 
$t(c) = t(c')$, we see that 
 $S_c S_{c'}^* \ne 0$ if and only if $c, c' \in \E^I$ for some $I \in \V.$
We also notice that 
if $c$ belongs to
$\E^I$, then $S_c = \sum_{d \in \E_I}S_c S_d S_d^*$, so that 
the identity
$V_c =\sum_{d \in \E_I}S_c S_d S_d^* \otimes s_c^{I*}$
holds.
We then have the following lemma.
\begin{lemma}
The partial isometry
$V \in \OZ\otimes B(\ell^2(\N))$
defined above 
has the following properties:
\begin{gather}
V V^* = P_A \otimes 1, \qquad
V^* V = P_B \otimes 1, \qquad
V \C V^* \subset \C,
\qquad
V^* \C V \subset \C
\quad
\text{ and }\\
(\rho^Z_t \otimes \id)(V) =e^{2 \pi \sqrt{-1} t} V
\quad
\text{ for }
\quad
 t \in \R/\Z.
\end{gather}
\end{lemma}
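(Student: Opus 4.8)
The plan is to verify the five identities by direct expansion of $V=\sum_{c\in\E_C}S_c\otimes s_c^{*}$, where I write $s_c$ for the isometry $s_c^{t(c)}$ attached to the terminal vertex $t(c)\in\V_{C,D}$, and to reduce everything to the Cuntz--Krieger relations for $\OZ$ together with the four properties \eqref{eq:sc} of the families $\{s_c\}$ supplied by Lemma~\ref{lem:isomet}. (Here I read the last two assertions as $V(\DZ\otimes\C)V^{*}\subset\DZ\otimes\C$ and $V^{*}(\DZ\otimes\C)V\subset\DZ\otimes\C$, since $V\in\OZ\otimes B(\ell^2(\N))$ forces the diagonal in question to be $\DZ\otimes\C$.)

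First I would compute $VV^{*}=\sum_{c,c'}S_cS_{c'}^{*}\otimes s_c^{*}s_{c'}$. Each summand with $c\neq c'$ vanishes: if $t(c)\neq t(c')$ then $S_cS_{c'}^{*}=0$ because distinct range projections $S_cS_c^{*}$ are mutually orthogonal, while if $t(c)=t(c')=:I$ the isometries $s_c,s_{c'}$ lie in the same family $\{s_c\}_{c\in\E^I}$ with orthogonal ranges, so $s_c^{*}s_{c'}=0$. The diagonal terms give $\sum_{c\in\E_C}S_cS_c^{*}\otimes s_c^{*}s_c=P_A\otimes1$, using $s_c^{*}s_c=1$. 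Dually, $V^{*}V=\sum_{c,c'}S_c^{*}S_{c'}\otimes s_cs_{c'}^{*}$, where the cross terms die on the $\OZ$-side ($S_c^{*}S_{c'}=0$ for $c\neq c'$); substituting $S_c^{*}S_c=\sum_dZ(c,d)S_dS_d^{*}$ and regrouping over $d$, the tensor factor becomes $\sum_cZ(c,d)s_cs_c^{*}=\sum_{c\in\E^{s(d)}}s_cs_c^{*}=1$ by the completeness relation of Lemma~\ref{lem:isomet} at $s(d)\in\V_{C,D}$, which yields $V^{*}V=P_B\otimes1$. The gauge identity is immediate: since $\rho^Z_t(S_c)=e^{2\pi\sqrt{-1}t}S_c$, one gets $(\rho^Z_t\otimes\id)(V)=e^{2\pi\sqrt{-1}t}V$.

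The substance is in the two invariance statements. For $x\in\DZ$ and $T\in\C$ I would expand $V(x\otimes T)V^{*}=\sum_{c,c'}S_cxS_{c'}^{*}\otimes s_c^{*}Ts_{c'}$ and argue that all $c\neq c'$ terms drop out by a complementary case split. When $t(c)\neq t(c')$ I use the factorization $S_cxS_{c'}^{*}=(S_cxS_c^{*})(S_cS_{c'}^{*})$, valid because $x$ commutes with $S_c^{*}S_c\in\DZ$, together with $S_cS_{c'}^{*}=0$. When $t(c)=t(c')$ but $c\neq c'$, the range projection $s_{c'}s_{c'}^{*}$ is a diagonal operator (explicit in the construction of Lemma~\ref{lem:isomet}), so it commutes with $T\in\C$ and $s_c^{*}Ts_{c'}=(s_c^{*}s_{c'})(s_{c'}^{*}Ts_{c'})=0$. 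The surviving diagonal terms $\sum_cS_cxS_c^{*}\otimes s_c^{*}Ts_c$ lie in $\DZ\otimes\C$ because $S_c\DZ S_c^{*}\subset\DZ$ and $s_c^{*}\C s_c\subset\C$. The computation for $V^{*}(x\otimes T)V=\sum_{c,c'}S_c^{*}xS_{c'}\otimes s_cTs_{c'}^{*}$ is parallel and cleaner: for $c\neq c'$ the $\OZ$-factor already vanishes, since on $x=S_\mu S_\mu^{*}$ the products $S_c^{*}S_\mu$ and $S_\mu^{*}S_{c'}$ require the first edge of $\mu$ to be simultaneously $c$ and $c'$; the diagonal terms then lie in $\DZ\otimes\C$ via $S_c^{*}\DZ S_c\subset\DZ$ and $s_c\C s_c^{*}\subset\C$.

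The step I expect to be the \emph{main obstacle} is exactly the vanishing of these off-diagonal terms, where one plays the orthogonality coming from $\OZ$ (distinct range projections, or incompatible initial edges of admissible paths) against the orthogonality coming from the isometry families $\{s_c\}$ on $\ell^2(\N)$. The two mechanisms cover complementary cases ($t(c)\neq t(c')$ versus $t(c)=t(c')$ with $c\neq c'$), so the real care is in organizing the split so that in every case at least one of the two tensor factors is zero. Once this is arranged, the remaining containments are routine consequences of $S_c\DZ S_c^{*}\subset\DZ$, $S_c^{*}\DZ S_c\subset\DZ$ and the invariance properties \eqref{eq:sc} of Lemma~\ref{lem:isomet}.
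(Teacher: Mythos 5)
Your proposal is essentially the paper's own proof: the same expansion of $V$, the same interplay between orthogonality in $\OZ$ and orthogonality of the isometry families of Lemma~\ref{lem:isomet} to kill off-diagonal terms, the same substitution $S_c^*S_c=\sum_{d}Z(c,d)S_dS_d^*$ followed by $\sum_{c\in\E^I}s_cs_c^*=1$ for the $V^*V$ computation, and the same one-line equivariance argument. Your reading of the invariance assertions as $V(\DZ\otimes\C)V^*\subset\DZ\otimes\C$ and $V^*(\DZ\otimes\C)V\subset\DZ\otimes\C$ is the intended one (it is exactly what the proof of Theorem~\ref{thm:main1} uses), and your case analysis supplies details of the off-diagonal vanishing that the paper only asserts.

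However, one justification you give is incorrect, even though the fact it supports is true. You claim that for $t(c)\neq t(c')$ one has $S_cS_{c'}^*=0$ ``because distinct range projections $S_cS_c^*$ are mutually orthogonal.'' Orthogonality of the range projections is what kills $S_c^*S_{c'}$ (as you correctly use in the $V^*V$ computation); it says nothing directly about $S_cS_{c'}^*$. Indeed, if that reasoning were valid it would give $S_cS_{c'}^*=0$ for \emph{every} pair $c\neq c'$, in particular when $t(c)=t(c')$, where $S_cS_{c'}^*$ is a nonzero partial isometry --- and then your careful case split would be pointless. The correct reason, which the paper records just before the lemma, is orthogonality of the \emph{initial} projections: since
\begin{equation*}
S_cS_{c'}^*=S_c\,(S_c^*S_c)(S_{c'}^*S_{c'})\,S_{c'}^*,
\qquad
(S_c^*S_c)(S_{c'}^*S_{c'})=\sum_{d\in\E_D}Z(c,d)Z(c',d)\,S_dS_d^*,
\end{equation*}
and $Z(c,d)Z(c',d)=1$ forces $s(d)=t(c)=t(c')$, the product vanishes exactly when $t(c)\neq t(c')$. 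Substituting this argument here, and at the point in your $V(x\otimes T)V^*$ computation where you again invoke $S_cS_{c'}^*=0$ for $t(c)\neq t(c')$, makes your proof complete; everything else is sound.
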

\begin{proof}
 We have the following equalities:
\begin{align*}
VV^* 
&= \sum_{I \in \V_{C,D}} (\sum_{c \in \E^I}S_c \otimes s_c^{I*})\cdot
     \sum_{I' \in \V_{C,D}} (\sum_{c' \in \E^{I'}}S_{c'}^* \otimes s_{c'}^{I'}) \\
&= \sum_{I, I' \in \V_{C,D}} \sum_{c \in \E^I}\sum_{c' \in \E^{I'}}
     S_c S_{c'}^* \otimes s_c^{I*} s_{c'}^{I'} \\
&= \sum_{I \in \V_{C,D}} \sum_{c, c' \in \E^I} S_c S_{c'}^* \otimes s_c^{I*} s_{c'}^{I} \\
&= \sum_{I \in \V_{C,D}} \sum_{c \in \E^I} S_c S_{c}^* \otimes s_c^{I*} s_{c}^I \\
&= \sum_{ c \in \E_C} S_c S_{c}^* \otimes 1 = P_A\otimes 1.
\end{align*}
We also have 
\begin{align*}
V^* V 
&= \sum_{I \in \V_{C,D}} (\sum_{c \in \E^I}S_c^* \otimes s_c^I)\cdot
     \sum_{I' \in \V_{C,D}} (\sum_{c' \in \E^{I'}}S_{c'} \otimes s_{c'}^{I'*}) \\
&= \sum_{I, I' \in \V_{C,D}} \sum_{c \in \E^I}\sum_{c' \in \E^{I'}}
     S_c^* S_{c'} \otimes s_c^I s_{c'}^{I'*} \\
&= \sum_{I \in \V_{C,D}} \sum_{c \in \E^I} S_c^* S_{c} \otimes s_c^I s_{c}^{I*} \\
&= \sum_{I \in \V_{C,D}} \sum_{c \in \E^I} 
(\sum_{d \in \E_I} S_d S_{d}^*) \otimes s_c^I s_{c}^{I*} \\
&= \sum_{I \in \V_{C,D}} \sum_{d \in \E_I}( S_d S_{d}^* \otimes 
  \sum_{c \in \E^I} s_c^I s_{c}^{I*} )\\
&= \sum_{I \in \V_{C,D}} \sum_{d \in \E_I}( S_d S_{d}^* \otimes 1)\\
&= \sum_{d \in \E_D}  (S_d S_{d}^* \otimes 1) = P_B\otimes 1.
\end{align*}
Since for $a \in \C$ and $c \in \E^I, c' \in \E^{I'}$,
we see
$V_c^I a V_{c'}^{I'*} =0$ if $c\ne c'$ 
and 
$V_c^{I*} a V_{c'}^{I'} =0$ if $c\ne c'$,
so that we have
\begin{equation*}
V a V^*  = \sum_{I \in V_{C,D}}\sum_{c \in \E_C}V_c^I a V_c^{I*}, \qquad 
V^* a V  = \sum_{I \in V_{C,D}}\sum_{c \in \E_C}V_c^{I*} a V_c^I. 
\end{equation*}
It is easy to see that 
both  elements 
$V_c^I a V_c^{I*}$ and $V_c^{I*} a V_c^I$
belong to $\C$
so that we have 
$
V \C V^* \subset \C,
$
and
$
V^* \C V \subset \C.
$
The equality
$(\rho^Z_t \otimes \id)(V) =e^{2 \pi \sqrt{-1} t} V$
 for
$
 t \in \R/\Z
$ is clear
because 
$
(\rho_t^Z\otimes\id)(S_c \otimes s_c^{I*}) = 
e^{2 \pi \sqrt{-1} t}(S_c \otimes s_c^{I*}).
$
\end{proof}


As in \cite[Section 4]{MaPre2015},
The homomorphisms  
$\phi:C(X_A,\Z) \rightarrow  C(X_B,\Z)$
and
$\psi:C(X_B,\Z) \rightarrow  C(X_A,\Z)$
defined by
\begin{equation*}
\phi(f) = \sum_{d \in \E_D}S_d f S_d^*,
\qquad
\psi(g) = \sum_{c \in \E_C}S_c g S_c^*
\end{equation*}
for 
$f \in C(X_A,\Z), g \in C(X_B,\Z)$
 satisfy 
\eqref{eq:psiphi}.
We will show the following theorem.
\begin{theorem}\label{thm:main1}
Let $A, B$ be nonnegative square matrices 
both of which are irreducible and not any permutations.  
Suppose that they are elementary equivalent such that
$A = CD$ and $B =DC$ for some nonnegative rectangular matrices $C$ and $D$.
Then there exists an isomorphism
$\Phi:\SOA \rightarrow \SOB$ satisfying 
$\Phi(\SDA) = \SDB$ such that
\begin{equation}
\Phi  \circ (\rho^{A,\psi(g)}_t\otimes \id)
 = (\rho^{B,g}_t\otimes\id) \circ \Phi
\quad
\text{ for }
g \in C(X_B,\Z), \, 
 t \in \T. \label{eq:Phirho1}
\end{equation}
In particular, we have
\begin{equation}
\Phi  \circ (\rho^{A}_t\otimes \id)
 = (\rho^{B}_t\otimes\id) \circ \Phi
\quad
\text{ for }
 t \in \T. \label{eq:Phirho}
\end{equation}
\end{theorem}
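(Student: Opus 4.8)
The plan is to produce $\Phi$ as conjugation by the partial isometry $V$ built in \eqref{eq:V}, and to reduce the intertwining \eqref{eq:Phirho1} to the single observation that $V$ is fixed by a suitably chosen generalized gauge action on $\OZ$. First I would define $\Phi(x) = V^* x V$ for $x \in \SOA$ and $\Psi(y) = V y V^*$ for $y \in \SOB$, where $V \in \OZ \otimes B(\ell^2(\N)) \subseteq M(\SOZ)$. Since $VV^* = P_A \otimes 1$ and $V^*V = P_B \otimes 1$, we have $V = (P_A\otimes 1)V = V(P_B\otimes 1)$, so $\Phi$ carries $\SOA = (P_A\otimes 1)(\SOZ)(P_A\otimes 1)$ into $\SOB = (P_B\otimes 1)(\SOZ)(P_B\otimes 1)$, using the corner identities $P_A\OZ P_A = \OA$, $P_B\OZ P_B = \OB$ from \eqref{eq:4.1}. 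The map $\Phi$ is a $*$-homomorphism because $\Phi(x_1)\Phi(x_2) = V^*x_1(VV^*)x_2V = V^*x_1(P_A\otimes 1)x_2V = V^*x_1x_2V$, using $x_1 = x_1(P_A\otimes 1)$; and $\Psi\circ\Phi = \id$, $\Phi\circ\Psi = \id$ follow from $VV^* = P_A\otimes 1$ and $V^*V = P_B\otimes 1$ acting as the units of the two corners. Hence $\Phi$ is an isomorphism with inverse $\Psi$.

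Next, for the diagonals, I would invoke the preceding lemma, which gives $V(\SDZ)V^* \subseteq \SDZ$ and $V^*(\SDZ)V \subseteq \SDZ$. Together with the corner identities $\SDA = (P_A\otimes 1)(\SDZ)(P_A\otimes 1)$ and $\SDB = (P_B\otimes 1)(\SDZ)(P_B\otimes 1)$, coming from $\DZ P_A = \DA$, $\DZ P_B = \DB$ in \eqref{eq:4.1} and $P_A,P_B\in\DZ$, any $x\in\SDA$ satisfies $\Phi(x) = V^*xV\in\SDZ$ and also lies in the $P_B$-corner, hence in $\SDB$. The symmetric computation for $\Psi$ gives the reverse inclusion, so $\Phi(\SDA) = \SDB$.

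The crux is the intertwining \eqref{eq:Phirho1}. Here I would introduce the generalized gauge action $\rho^{Z,g}_t$ on $\OZ$ attached by \eqref{eq:rhotf} to $g\in C(X_B,\Z)$, regarded as a function in $C(X_Z,\Z)=\DZ$ with $gP_A = 0$. The decisive point is that, since $gP_A=0$, the unitary $U_t(g)=\exp(2\pi\sqrt{-1}tg)$ restricts to the identity on the range projection $S_cS_c^*\le P_A$ of every $c\in\E_C$, so $\rho^{Z,g}_t(S_c) = U_t(g)S_c = S_c$; consequently $(\rho^{Z,g}_t\otimes\id)(V) = V$. From $\rho^{Z,g}_t(S_c)=S_c$ and the commutation relation $S_cU_t(g)=U_t(S_cgS_c^*)S_c$ I would check, on the generators $S_a=S_cS_d$ and $S_b=S_dS_c$, that $\rho^{Z,g}_t$ restricts to $\rho^{A,\psi(g)}_t$ on $\OA=P_A\OZ P_A$, using $\psi(g)=\sum_c S_cgS_c^*$ from \eqref{eq:psiphi}, and to $\rho^{B,g}_t$ on $\OB=P_B\OZ P_B$. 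Since $V$ is fixed, $(\rho^{Z,g}_t\otimes\id)(\Phi(x)) = V^*(\rho^{Z,g}_t\otimes\id)(x)V = \Phi\bigl((\rho^{A,\psi(g)}_t\otimes\id)(x)\bigr)$ for $x\in\SOA$, while the left side equals $(\rho^{B,g}_t\otimes\id)(\Phi(x))$ because $\Phi(x)\in\SOB$; this is \eqref{eq:Phirho1}. Taking $g$ to be the unit of $\OB$, so that $g=P_B$ and $\psi(g)=P_A=1_A$, specializes to \eqref{eq:Phirho}.

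The step I expect to be the main obstacle is verifying that $\rho^{Z,g}_t$ restricts on $\OA$ to exactly $\rho^{A,\psi(g)}_t$: one must see that the weight the length-two word $S_cS_d$ picks up equals $\psi(g)$ on the range of $S_a$. This works precisely because $g$ is supported on the $P_B$-part, which forces $\rho^{Z,g}_t(S_c)=S_c$ and pushes the whole weight onto the intermediate $S_d$, after which $S_cU_t(g)=U_t(S_cgS_c^*)S_c$ reassembles it as $U_t(\psi(g))$ on $S_aS_a^*$; keeping track of the supports in this reassembly is the only delicate computation, everything else being corner and Morita bookkeeping already prepared by the two lemmas.
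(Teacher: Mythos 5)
Your proposal is correct, and its skeleton coincides with the paper's: $\Phi = \Ad(V^*)$ restricted to the corner $(P_A\otimes 1)(\SOZ)(P_A\otimes 1)$, with $VV^*=P_A\otimes 1$, $V^*V=P_B\otimes 1$, and the diagonal statement read off from the lemma, exactly as in the paper. The genuine difference is how you prove the crux identity \eqref{eq:Phirho1}. The paper does it by one chain of equalities on generators: it expands $V^*\bigl(U_t(\psi(g))S_{a_i}\otimes T\bigr)V$, moves the unitary across $S_c^*$ using $S_c^*\psi(g)S_c = gS_c^*S_c$, and recognizes the outcome as $(\rho^{B,g}_t\otimes\id)\bigl(V^*(S_{a_i}\otimes T)V\bigr)$. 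You instead promote $g$ to a function on $X_Z$ supported on the $P_B$-side (via $\DB=\DZ P_B$), form the generalized gauge action $\rho^{Z,g}_t$ on $\OZ$, note that $U_t(g)S_c=S_c$ for $c\in\E_C$ so that $(\rho^{Z,g}_t\otimes\id)(V)=V$, and identify the restrictions of $\rho^{Z,g}_t$ to the corners $P_A\OZ P_A$ and $P_B\OZ P_B$ with $\rho^{A,\psi(g)}_t$ and $\rho^{B,g}_t$; the intertwining is then formal, and \eqref{eq:Phirho} drops out as the case $g=1$ (indeed $\psi(1_B)=\sum_{c}S_cP_BS_c^*=P_A=1_A$), matching the ``in particular'' of the statement. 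Your two restriction claims are the paper's inline computation in disguise: they rest on $S_cU_t(g)=U_t(S_cgS_c^*)S_c$ together with orthogonality of the ranges $S_{c'}S_{c'}^*$, which is what lets you replace $U_t(S_cgS_c^*)$ by $U_t(\psi(g))$ on the range of $S_a$. So nothing is missing. What your packaging buys is a conceptual explanation of \eqref{eq:Phirho1} — both actions are restrictions of a single action on the ambient algebra $\OZ$ that fixes $V$ — at the modest extra cost of checking that $\rho^{Z,g}_t$ preserves the two corners (it does, since it fixes $\DZ$, hence $P_A$ and $P_B$, pointwise); the paper's computation is more self-contained but leaves this mechanism implicit. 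Two cosmetic corrections: $C(X_Z,\Z)$ is not equal to $\DZ$ (it is the additive group of integer-valued functions, sitting inside $\DZ\cong C(X_Z)$), and the lemma's inclusions $V\C V^*\subset\C$ should be read, as you in fact use them, as $V(\SDZ)V^*\subset\SDZ$ and $V^*(\SDZ)V\subset\SDZ$.
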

\begin{proof}
Through the identification \eqref{eq:4.1},
the restriction of the map
$x \otimes T \in \OZ \otimes\K 
\longrightarrow 
 V^*(x\otimes T)V
\in
\OZ \otimes\K$
to $P_A\OZ P_A \otimes \K$ yields an isomorphism
from
$\OA\otimes\K$ to $\OB\otimes\K$,
because
$V$ belongs to
$\OZ\otimes B(\ell^2(\N))$ and
$\OZ\otimes B(\ell^2(\N)) \subset M(\OZ\otimes\K)$,
 the multiplier algebra of $\OZ\otimes\K$.
The isomorphism is denoted by $\Phi$.
The previous lemma ensures us that $\Phi$ satisfies 
$\Phi(\SDA) = \SDB$ and the identity \eqref{eq:Phirho}.
We will show the equality \eqref{eq:Phirho1}.
We write 
$V = \sum_{c \in \E_C}S_c\otimes s_c^*$
instead of
$\sum_{I \in \V_{C,D}}\sum_{c \in \E^I} S_c \otimes s_c^{I*}.
$
For $g \in C(X_B,\Z), a_i \in \E_A, T\in \K$, we have the equalities.
\begin{align*}
\Phi  \circ (\rho^{A,\psi(g)}_t\otimes \id)(S_{a_i}\otimes T)
=&V^* (\rho^{A,\psi(g)}_t(S_{a_i})\otimes T) V\\
=&(\sum_{c \in \E_C} S_c^* \otimes s_c)
  (U_t(\psi(g))S_{a_i}\otimes T) (\sum_{c' \in \E_C} S_{c'} \otimes s_{c'}^*)\\
=& \sum_{c, c' \in \E_C} U_t(S_c^*\psi(g)S_c) S_c^*S_{a_i}S_{c'} 
\otimes s_c T s_{c'}^* \\
=& \sum_{c,c' \in \E_C} U_t(g) S_c^*S_{a_i}S_{c'} 
\otimes s_c T s_{c'}^* \\
=& \sum_{c' \in \E_C} U_t(g) S_{c(a_i)}^*S_{c(a_i)}S_{d(a_i)}S_{c'} 
\otimes s_{c(a_i)} T s_{c'}^* \\
=& \sum_{c' \in \E_C}  S_{c(a_i)}^*S_{c(a_i)}U_t(g) S_{d(a_i)}S_{c'} 
\otimes s_{c(a_i)} T s_{c'}^* \\
=& \sum_{c' \in \E_C}  S_{c(a_i)}^*S_{c(a_i)} \rho_t^{B,g}(S_{d(a_i)}S_{c'}) 
\otimes s_{c(a_i)} T s_{c'}^* \\
=& (\rho_t^{B,g} \otimes \id)(
   \sum_{c' \in \E_C}  S_{c(a_i)}^*S_{c(a_i)} S_{d(a_i)}S_{c'}) 
\otimes s_{c(a_i)} T s_{c'}^* ) \\
=& (\rho_t^{B,g} \otimes \id)(
\sum_{c,c' \in \E_C} S_c^*S_{a_i}S_{c'} 
\otimes s_c T s_{c'}^* ) \\
=& (\rho^{B,g}_t\otimes\id)
(V^* (S_{a_i}\otimes T) V)\\
=& ((\rho^{B,g}_t\otimes\id) \circ \Phi)(S_{a_i}\otimes T).
\end{align*}
Therefore we have the equality
\eqref{eq:Phirho1}.
\end{proof}
\begin{remark}
{\bf 1.}  Theorem \ref{thm:main1} can be directly seen
from Carlsen-Rout's results \cite[Theorem 3.3 and Theorem 5.1]{CR}.
However our proof above is completely different from theirs,
and also our construction of the isomorphism
$\Phi$ will be used in order to clarify its K-theoretic behavior in the following section. 

{\bf 2.} 
In the author's earlier paper \cite{MaPre2016}, 
a similar result \cite[Theorem 1.3]{MaPre2016} to Theorem \ref{thm:main1}
has been obtained. 
The previous one \cite[Theorem 1.3]{MaPre2016}
asserted cocycle conjugacy of gauge actions $\rho^A$ and $\rho^B$
between $\SOA$ and $\SOB$,
whereas our result Theorem \ref{thm:main1} 
does not need the cocycles to obtain the equalities \eqref{eq:Phirho1}, \eqref{eq:Phirho}.

{\bf 3.} Let $\O2$ be the Cuntz algebra of order $2$
which is the Cuntz--Krieger algebra for the matrix $[2]$.
Let $\D2$ be the canonical maximal abelian $C^*$-subalgebra $\mathcal{D}_{[2]}$
of $\O2$. 
We fix the pair $\O2$ and $\D2$.
We keep the situation $A = CD, B=DC$.
For each $I \in \V_{C,D}$,
the set $\E^I$ of edges terminating at the vertex $I$ is finite,
we may find a family $\{t_c \}_{c \in \E^I}$
of isometries in $\O2$ such hat 
\begin{equation}
\sum_{c \in \E^I}t_c t_c^* = 1,
\qquad
t_c^* t_c = 1
\quad
\text{ and }
\quad
t_c \D2 t_c^* \subset \D2,
\qquad
t_c^* \D2 t_c \subset \D2
\quad
\text{ for } c \in \E^I 
\end{equation}
which are the same relations as \eqref{eq:sc}.
Put
$V_2 =\sum_{I \in \V_{C,D}}\sum_{c \in \E^I} S_c \otimes t_c^{*}$ in 
$\OZ\otimes\O2$
instead of $V$ defined in \eqref{eq:V}.
We set
$\Phi_2 =\Ad(V_2): \OZ\otimes\O2 \longrightarrow \OZ\otimes\O2$.
By  a completely similar manner to the above discussions, we can show that 
there exists an isomorphism
$\Phi:\OA\otimes\O2 \rightarrow \OB\otimes\O2$ satisfying 
$\Phi(\DA\otimes\D2) = \DA\otimes\D2$ such that
\begin{equation*}
\Phi  \circ (\rho^{A}_t\otimes \id)
 = (\rho^{B}_t\otimes\id) \circ \Phi
\quad
\text{ for }
 t \in \T.
\end{equation*}
Hence we have the following proposition.
\begin{proposition}
Suppose that irreducible nonnegative matrices  $A$ and $B$ are strong shift equivalent.
Then there exists an isomorphism
$\Phi:\OA\otimes\O2 \rightarrow \OB\otimes\O2$ satisfying 
$\Phi(\DA\otimes\O2) = \DB\otimes\D2$ such that
\begin{equation*}
\Phi  \circ (\rho^{A}_t\otimes \id)
 = (\rho^{B}_t\otimes\id) \circ \Phi
\quad
\text{ for }
 t \in \T
\end{equation*}
\end{proposition}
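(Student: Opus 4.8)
The plan is to obtain the strong shift equivalence case by composing the one-step isomorphisms that the preceding remark already supplies for elementary equivalences. First I would unwind the hypothesis: strong shift equivalence of $A$ and $B$ means there is a finite chain
$$
A = A_0 \underset{C_1,D_1}{\approx} A_1 \underset{C_2,D_2}{\approx} \cdots \underset{C_n,D_n}{\approx} A_n = B,
$$
with $A_{i-1} = C_i D_i$ and $A_i = D_i C_i$ for $i = 1, \dots, n$, so that any two consecutive matrices are elementary equivalent.

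For each link I would apply the construction of the remark to the pair $(A_{i-1}, A_i)$: form the bipartite matrix $Z_i = \begin{bmatrix} 0 & C_i \\ D_i & 0 \end{bmatrix}$, choose for every vertex of $G_{Z_i}$ a finite family of isometries in $\O2$ obeying \eqref{eq:sc}, and assemble the partial isometry $V_2^{(i)} \in \mathcal{O}_{Z_i} \otimes \O2$. This yields an isomorphism $\Phi_i : \mathcal{O}_{A_{i-1}} \otimes \O2 \to \mathcal{O}_{A_i} \otimes \O2$ carrying $\mathcal{D}_{A_{i-1}} \otimes \D2$ onto $\mathcal{D}_{A_i} \otimes \D2$ and satisfying $\Phi_i \circ (\rho^{A_{i-1}}_t \otimes \id) = (\rho^{A_i}_t \otimes \id) \circ \Phi_i$ for $t \in \T$. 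The construction of $V_2^{(i)}$ and of $\Phi_i$ is purely algebraic, resting only on the Cuntz--Krieger relations of $\mathcal{O}_{Z_i}$ and the factorization $A_{i-1} = C_i D_i$, $A_i = D_i C_i$, so it applies to each link even though the intermediate matrices in the chain need not themselves be irreducible. The isomorphism asserted by the proposition is then the composite $\Phi := \Phi_n \circ \cdots \circ \Phi_1 : \OA \otimes \O2 \to \OB \otimes \O2$, and both the diagonal identity $\Phi(\DA \otimes \D2) = \DB \otimes \D2$ and the gauge-equivariance $\Phi \circ (\rho^A_t \otimes \id) = (\rho^B_t \otimes \id) \circ \Phi$ fall out by telescoping the corresponding one-step relations through the $n$ factors.

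The step I expect to require the most care — though it is conceptual rather than computational — is verifying that one fixed copy of $\O2$ may serve at every link, so that the codomain of $\Phi_i$ is literally the domain of $\Phi_{i+1}$ and the composite is defined with no further tensoring. This is the exact counterpart of the role played by $B(\ell^2(\N)) = M(\K)$ in the stable setting of Theorem \ref{thm:main1} and Corollary \ref{cor:main}: at each link the needed isometry families are located inside this one fixed $\O2$, which is possible because for every finite set $\E^I$ the algebra $\O2$ contains a unital family of $|\E^I|$ isometries satisfying \eqref{eq:sc} together with $t_c \D2 t_c^* \subset \D2$ and $t_c^* \D2 t_c \subset \D2$ (for instance, words in the two canonical generators forming a complete prefix code). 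In particular the self-absorption $\O2 \otimes \O2 \cong \O2$ is never invoked; the auxiliary factor is reused, not multiplied. Granting this, the composition is formal: each $\Phi_i$ intertwines $\rho^{A_{i-1}}$ with $\rho^{A_i}$, and inserting these intertwiners in turn carries $\rho^{A_0} = \rho^A$ all the way to $\rho^{A_n} = \rho^B$.
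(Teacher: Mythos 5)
Your proposal is correct and follows essentially the same route as the paper: the paper's Remark constructs, for a single elementary equivalence $A=CD$, $B=DC$, the partial isometry $V_2\in\mathcal{O}_Z\otimes\mathcal{O}_2$ from diagonal-preserving isometry families in one fixed copy of $\mathcal{O}_2$, obtains the one-step gauge-equivariant, diagonal-preserving isomorphism $\mathcal{O}_A\otimes\mathcal{O}_2\rightarrow\mathcal{O}_B\otimes\mathcal{O}_2$, and the proposition is then exactly the composition of these one-step isomorphisms along the strong shift equivalence chain, just as you describe. Your added observations (that the fixed auxiliary $\mathcal{O}_2$ suffices at every link via a complete prefix code of isometries, with no appeal to $\mathcal{O}_2\otimes\mathcal{O}_2\cong\mathcal{O}_2$) only make explicit what the paper leaves implicit.
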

Since both the algebras $\OA, \OB$ are unital, simple, purely infinite and nuclear,
 by \cite{Ro}, the above  $C^*$-algebras 
$\OA\otimes\O2, \OB\otimes\O2
$ are isomorphic to $\O2$,
and 
$\DA\otimes\D2, \DB\otimes\D2$ are maximal abelian
in 
$\OA\otimes\O2, \OB\otimes\O2,
$  respectively.
Hence the classification problem of two-sided topological Markov shifts
are closely related to that of circle actions on the Cuntz algebra $\O2$
trivially acting on its maximal abelian $C^*$-subalgebras. 
\end{remark}

\section{Isomorphism $\Phi_*: K_0(\OA)\longrightarrow K_0(\OB)$}
Let $A, B$ be nonnegative square matrices 
both of which are irreducible and not any permutations.  
Suppose that they are elementary equivalent such that
$A = CD$ and $B =DC$ for some nonnegative rectangular matrices $C, D$.
By Theorem \ref{thm:main1},
we have an isomorphism
$\Phi:\SOA \rightarrow \SOB$ satisfying 
$\Phi(\SDA) = \SDB$
and \eqref{eq:Phirho1}.
We will in this section  clarify the 
K-theoretic behavior 
$\Phi_* : K_0(\OA) \rightarrow K_0(\OB)$
of the isomorphism 
$\Phi:\SOA \rightarrow \SOB$.
As in the preceding section, 
for the
$N\times N$ matrix 
$A =[A(i,j)]_{i,j=1}^N $,
we have a directed graph
$G_A = (\V_A,\E_A)$
and its transition matrix
$A^G =[A^G(i,j)]_{i,j=1}^{N_A}$ with entries in $\{0,1\}$.
For the other matrix $B$, 
we similarly have 
a directed graph
$G_B = (\V_B, \E_B)$ 
and its transition matrix
$B^G =[B^G(i,j)]_{i,j=1}^{M_B}$ with entries in $\{0,1\}$.
Let us denote their vertex sets and edge sets 
by
$\V_A =\{ I^A_1, \dots, I_N^A\}$,  
$\V_B =\{ I_1^B,\dots,I_M^{B} \}$
and
$\E_A =\{a_1,\dots,a_{N_A} \}$,
 $\E_B = \{ b_1, \dots,b_{M_B}\}$
respectively.
Recall that 
the Cuntz-Krieger algebras $\OA$ and $\OB$ 
are defined as the Cuntz--Krieger algebras
 ${\mathcal{O}}_{A^G}$ and ${\mathcal{O}}_{B^G}$, respectively.


As in Section 2, 
for any $a_i \in \E_A$, 
there  exist
$c(a_i) \in \E_C$ and $d(a_i) \in \E_D$ such that 
$a_i$ is written $c(a_i)d(a_i)$. 
Similarly  for any edge $b_l \in \E_B$,
there exist  
$d(b_l) \in \E_D$ and $c(b_l) \in \E_C$ 
such that
$b_l$ is written $d(b_l)c(b_l).$
The $N_A\times M_B$ matrix 
$\hat{D} = [\hat{D}(i,l)]_{i=1,\dots,N_A}^{l=1,\dots,M_B}$ 
has been defined in \cite{MaPre2016}
by
\begin{equation}
\hat{D}(i,l) =
\begin{cases} 
1 &  \text{  if  } d(a_i) = d(b_l), \\
0 &  \text{  otherwise.}
\end{cases} \label{eq:Dhat}
\end{equation}
It is direct or 
due to  \cite[Lemma 3.1]{MaPre2016}
to see that the multiplication 
of the transposed matrix 
$\hat{D}^t:$ 
$
[n_i]_{i=1}^{N_A} \in \Z^{N_A}\rightarrow 
[\sum_{i=1}^{N_A} \hat{D}(i,l) n_i]_{l=1}^{M_B} \in\Z^{M_B}
$ 
induces a homomorphism from
$\Z^{N_A}/{(\id - {(A^G)}^{t})\Z^{N_A}}$
to 
$\Z^{M_B}/{(\id - {(B^G)}^{t})\Z^{M_B}}$
as abelian groups, which is denoted by 
$\Phi_{\hat{D}^t}$.

Let us denote by
$e_i = (0,\dots,0,\overset{i}{1},0,\dots,0)$
the vector in 
$\Z^{N_A}$
whose $i$th component is $1$, elsewhere  zero. 
Its class 
in $\Z^{N_A}/{(\id - {(A^G)}^{t})\Z^{N_A}} $ 
is denoted by
$[e_i]$.
J. Cuntz in \cite{Cu3}  showed that
the map
$\epsilon_{A^G}: K_0({\mathcal{O}}_{A^G}) 
\rightarrow \Z^{N_A}/{(\id - {(A^G)}^{t})\Z^{N_A}}$
defined by $\epsilon_{A^G}([S_{a_i}S_{a_i}^*] ) = [e_i]$
yields an isomorphism of abelian groups. 
We are assuming that  $A=CD, B=DC.$
Let  
$\Phi:\SOA \rightarrow \SOB$ 
be the isomorphism defined in Theorem \ref{thm:main1}.
We show the following proposition.
\begin{proposition}\label{prop:KTD}
Let  $\Phi_*: K_0({\mathcal{O}}_{A^G}) \longrightarrow K_0({\mathcal{O}}_{B^G})
$ be the induced isomorphism from
$\Phi:\SOA \rightarrow \SOB$.
Then we have 
$\Phi_*\circ \epsilon_{A^G} =\epsilon_{B^G}\circ \Phi_*.$
\end{proposition}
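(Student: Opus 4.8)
The plan is to prove the commutative square $\epsilon_{B^G}\circ\Phi_* = \Phi_{\hat D^t}\circ\epsilon_{A^G}$ by evaluating both sides on the distinguished generators of $K_0(\mathcal{O}_{A^G})$. Since $\epsilon_{A^G}$ is an isomorphism onto $\Z^{N_A}/(\id - (A^G)^t)\Z^{N_A}$ and the classes $[e_i]$ generate the target group, the range projections $[S_{a_i}S_{a_i}^*]$, which satisfy $\epsilon_{A^G}([S_{a_i}S_{a_i}^*]) = [e_i]$, generate $K_0(\mathcal{O}_{A^G})$; so it suffices to check the identity on each $[S_{a_i}S_{a_i}^*]$. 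The whole argument reduces to understanding where $\Phi$, implemented by $x\mapsto V^*xV$ with $V = \sum_{c\in\E_C}S_c\otimes s_c^*$, sends a minimal range projection tensored with a rank-one operator.

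First I would carry out the core computation in $\OZ\otimes\K$. Writing $a_i = c(a_i)d(a_i)$ I compute
\[
\Phi(S_{a_i}S_{a_i}^*\otimes e_{1,1}) = \sum_{c,c'\in\E_C}S_c^*S_{a_i}S_{a_i}^*S_{c'}\otimes s_c\, e_{1,1}\, s_{c'}^*.
\]
The key simplification is the orthogonality relation $S_c^*S_{c'} = \delta_{c,c'}S_c^*S_c$ in $\OZ$ (the range projections $S_cS_c^*$, $c\in\E_C$, are mutually orthogonal), which collapses the double sum to the single term $c=c'=c(a_i)$. Using the Cuntz--Krieger relation for $Z$ together with $Z(c(a_i),d(a_i))=1$ gives $S_{c(a_i)}^*S_{a_i} = S_{c(a_i)}^*S_{c(a_i)}S_{d(a_i)} = S_{d(a_i)}$, whence
\[
\Phi(S_{a_i}S_{a_i}^*\otimes e_{1,1}) = S_{d(a_i)}S_{d(a_i)}^*\otimes s_{c(a_i)}\, e_{1,1}\, s_{c(a_i)}^*.
\]
Since $s_{c(a_i)}$ is an isometry, $s_{c(a_i)}e_{1,1}s_{c(a_i)}^*$ is a rank-one projection, hence Murray--von Neumann equivalent to $e_{1,1}$ in $\K$; under the stabilization isomorphism $K_0(\OB\otimes\K)\cong K_0(\OB)$ this yields $\Phi_*([S_{a_i}S_{a_i}^*]) = [S_{d(a_i)}S_{d(a_i)}^*]$ in $K_0(\mathcal{O}_{B^G})$.

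It then remains to identify this class. The Cuntz--Krieger relation for $Z$ gives the orthogonal decomposition $S_{d(a_i)}S_{d(a_i)}^* = \sum_{b_l:\,d(b_l)=d(a_i)}S_{b_l}S_{b_l}^*$, so $[S_{d(a_i)}S_{d(a_i)}^*] = \sum_{l:\,d(b_l)=d(a_i)}[S_{b_l}S_{b_l}^*]$ in $K_0$. Feeding this into $\epsilon_{B^G}$ and using $\epsilon_{B^G}([S_{b_l}S_{b_l}^*])=[e_l]$ together with the definition \eqref{eq:Dhat} of $\hat D$ gives
\[
\epsilon_{B^G}(\Phi_*([S_{a_i}S_{a_i}^*])) = \sum_{l:\,d(b_l)=d(a_i)}[e_l] = \Big[\sum_{l}\hat D(i,l)\,e_l\Big] = \Phi_{\hat D^t}([e_i]) = \Phi_{\hat D^t}(\epsilon_{A^G}([S_{a_i}S_{a_i}^*])),
\]
which is exactly the asserted commutativity on generators.

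I expect the main obstacle to be the bookkeeping in the core computation: correctly exploiting the bipartite Cuntz--Krieger relations of $\OZ$ to collapse the double sum and to verify $S_{c(a_i)}^*S_{c(a_i)}S_{d(a_i)} = S_{d(a_i)}$, and, on the $K_0$ side, carefully justifying that replacing $s_{c(a_i)}e_{1,1}s_{c(a_i)}^*$ by $e_{1,1}$ is legitimate under the stabilization identification, so that the tensor factor in $\K$ genuinely drops out of the $K_0$ class. Everything else is a routine translation through the already-established isomorphisms $\epsilon_{A^G}$, $\epsilon_{B^G}$ and the definition of $\Phi_{\hat D^t}$.
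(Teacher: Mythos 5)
Your proof is correct and follows essentially the same route as the paper's: evaluate on the generators $S_{a_i}S_{a_i}^*\otimes p_1$, use $V^*(\,\cdot\,)V$ and orthogonality of the range projections to collapse the double sum to $S_{d(a_i)}S_{d(a_i)}^*\otimes s_{c(a_i)}p_1s_{c(a_i)}^*$, drop the rank-one tensor factor in $K_0$, and decompose $S_{d(a_i)}S_{d(a_i)}^*=\sum_{l}\hat{D}(i,l)S_{b_l}S_{b_l}^*$ to land on $\Phi_{\hat{D}^t}$. The only differences are cosmetic: you derive that last decomposition directly from the Cuntz--Krieger relations of $\mathcal{O}_Z$ where the paper cites \cite[Lemma 3.4]{MaPre2016}, you spell out the Murray--von Neumann equivalence justifying the removal of the $\mathcal{K}$-factor, and you correctly read the asserted identity as $\epsilon_{B^G}\circ\Phi_*=\Phi_{\hat{D}^t}\circ\epsilon_{A^G}$ (the paper's displayed formula is a typo, as its left-hand side does not type-check).
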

\begin{proof}
Let $p_1$ be the rank one projection onto the vector 
$\delta_1 \in \ell^2(\N)$.
By \cite{Cu3}, the $K_0$-group 
$K_0(\OAG)$ of $\OAG$ is generated by the projections of the form
$$
S_{a_i}S_{a_i}^*\otimes p_1 \in \OAG\otimes\K, \qquad i=1,\dots,N_A.
$$
It then follows that 
\begin{align*}
\Phi(S_{a_i}S_{a_i}^* \otimes p_1)
& = (\sum_{c \in \E_C}S_c\otimes s_c^*)^*(S_{a_i}S_{a_i}^* \otimes p_1)  
     (\sum_{c' \in \E_C}S_{c'}\otimes s_{c'}^*) \\
& = \sum_{c, c' \in \E_C} S_c^* S_{a_i}S_{a_i}^*S_{c'}
      \otimes s_c p_1 s_{c'}^*. 
\end{align*}
We know that 
$S_c^* S_{a_i}S_{a_i}^*S_{c'} =0$ if $c \ne c'$.
We also know that 
$S_c^* S_{a_i} =0$ if $c \ne c(a_i)$.
Hence the above last terms go to
the following
\begin{equation*} 
S_{c(a_i)}^* S_{a_i}S_{a_i}^*S_{c(a_i)}
      \otimes s_{c(a_i)} p_1 s_{c(a_i)}^*
=S_{d(a_i)} S_{d(a_i)}^*
      \otimes s_{c(a_i)} p_1 s_{c(a_i)}^*.
\end{equation*}
The projection 
$S_{d(a_i)} S_{d(a_i)}^*$ belongs to $\OBG$.
In the $K_0$-group of $\OBG$, we have 
$[S_{d(a_i)} S_{d(a_i)}^*\otimes s_{c(a_i)} p_1 s_{c(a_i)}^*]
=[S_{d(a_i)} S_{d(a_i)}^*\otimes  p_1]
$ in $K_0(\OBG)$.
By \cite[Lemma 3.4]{MaPre2016}, we see that
\begin{equation}
 S_{d(a_i)}S_{d(a_i)}^* = \sum_{l=1}^{M_B} \hat{D}(i,l) S_{b_l}S_{b_l}^*
\end{equation}
 so that we have in $K_0(\OBG)$ 
\begin{align*}
\Phi_*([S_{a_i}S_{a_i}^* \otimes p_1])
& = [\Phi(S_{a_i}S_{a_i}^* \otimes p_1)] \\
& = [S_{d(a_i)} S_{d(a_i)}^*\otimes s_{c(a_i)} p_1 s_{c(a_i)}^*] \\
& =[S_{d(a_i)} S_{d(a_i)}^*\otimes  p_1] \\
&=\sum_{l=1}^{M_B} \hat{D}(i,l)[ S_{b_l}S_{b_l}^*]
\end{align*}
This shows that $\Phi_*\circ \epsilon_{A^G} =\epsilon_{B^G}\circ \Phi_*.$
\end{proof}

We will define the matrices  $R_A$ and $S_A$
to connect between $A$ and $A^G$.
They are  the $N\times N_A$ matrix and $N_A\times N$ matrix 
 defined by 
\begin{equation*}
R_A(j,i) =
\begin{cases}
1 & \text{ if }   I_j^A = s(a_i),\\
0 & \text{ otherwise, }
\end{cases}
\qquad
S_A(i,j) =
\begin{cases}
1 & \text{ if }  t(a_i) = I_j^A,\\
0 & \text{ otherwise, }
\end{cases}
\end{equation*}
for $i = 1,\dots, N_A$ and $j=1,\dots,N,$ respectively.
It is direct to see that 
$A = R_A S_A$ and 
$A^G = S_A R_A$.
The matrices 
$R_B, S_B$ for the other matrix $B$
are similarly defined such that 
$B = R_B S_B$ and 
$B^G = S_B R_B$.
There are 
natural homomorphisms
\begin{gather*}
\Phi_{S_A^t}: \Z^{N_A}/{(\id - {(A^G)}^{t})\Z^{N_A}}
\rightarrow
\Z^{N}/{(\id - {A}^{t})\Z^{N}}, \\
\Phi_{S_B^t}:\Z^{M_B}/{(\id - {(B^G)}^{t})\Z^{M_B}} 
\rightarrow
\Z^{M}/{(\id - {B}^t)\Z^{M}}
\end{gather*}
of abelian groups 
induced from the matrix $S_A^t: \Z^{N_A}\rightarrow \Z^{N_A}$ 
and
$S_B^t: \Z^{M_B}\rightarrow \Z^{M_B}$,
respectively.
The homomorphisms
$\Phi_{S_A^t}$ and $\Phi_{S_B^t}
$
are both isomorphisms
because
their inverses are  given by the homomorphisms induced by
$R_A^t$ and $R_B^t$, respectively.
Since the conditions 
$A=CD, B=DC$ imply that  
$AC =CB$ and hence $C^t A^t = B^t C^t$,
we see the matrix $C^t: \Z^{N}\rightarrow \Z^{M}$ 
induces a homomorphism 
\begin{equation*}
\Phi_{C^t}: \Z^{N}/{(\id - {A}^{t})\Z^{N}} \longrightarrow \Z^{M}/{(\id - {B}^{t})\Z^{M}}
\end{equation*}
of abelian groups, 
which is  actually an isomorphism having $\Phi_{D^t}$ as its inverse.
The following lemma is seen in \cite{MaPre2016}. 
Hiroki Matui kindly pointed out the second assertion (ii). 
The author would like to thank him.
\begin{lemma}[{\cite[Lemma 3.5]{MaPre2016}}] \label{lem:Matuilemma}
\hspace{6cm}
\begin{enumerate}
\renewcommand{\theenumi}{\roman{enumi}}
\renewcommand{\labelenumi}{\textup{(\theenumi)}}
\item $\Phi_{S^t_{B}}\circ \Phi_{\hat{D}^t} = \Phi_{C^t}\circ\Phi_{S^t_{A}}.$
\item
$\Phi_{S^t_A}([(1,1,\dots,1)]) = [(1,1,\dots,1)].$
\end{enumerate}
\end{lemma}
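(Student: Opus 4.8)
The plan is to reduce both statements to identities of integer matrices and then read them off on the quotient groups, since each of the maps $\Phi_{S^t_A},\Phi_{S^t_B},\Phi_{\hat D^t},\Phi_{C^t}$ is by definition induced by left multiplication by an integer matrix, and any matrix identity (or congruence modulo the relevant boundary submodule) descends automatically to the quotient groups $\Z^N/(\id-A^t)\Z^N$ and their analogues. Unwinding the compositions, $\Phi_{S^t_B}\circ\Phi_{\hat D^t}$ is induced by the $M\times N_A$ matrix $S_B^t\hat D^t$ and $\Phi_{C^t}\circ\Phi_{S^t_A}$ by the $M\times N_A$ matrix $C^t S_A^t$; so for (i) it suffices to prove the literal equality $S_B^t\hat D^t=C^t S_A^t$.

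To do this I would compute the two sides entrywise, fixing an edge $a_i\in\E_A$ and a vertex $I_k^B\in\V_B$. The right-hand entry $(C^t S_A^t)(k,i)=\sum_{j}C(j,k)\,[\,t(a_i)=I_j^A\,]$ collapses to the single term $C(t(a_i),I_k^B)$, because $S_A$ records the terminal vertex of $a_i$. The left-hand entry $(S_B^t\hat D^t)(k,i)=\sum_{l}[\,t(b_l)=I_k^B\,]\,[\,d(b_l)=d(a_i)\,]$ counts the edges $b_l\in\E_B$ with terminal vertex $I_k^B$ and with $D$-component $d(a_i)$. Writing $b_l=d(b_l)c(b_l)$ and using $t(a_i)=t(d(a_i))$, fixing the $D$-edge to be $d(a_i)$ forces $b_l$ to be a composable pair $d(a_i)c$ with $s(c)=t(a_i)$ and $t(c)=I_k^B$; since $\E_B$ is in bijection with such pairs, this count is exactly the number of $C$-edges from $t(a_i)$ to $I_k^B$, namely $C(t(a_i),I_k^B)$ again. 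Hence $S_B^t\hat D^t=C^t S_A^t$ as integer matrices, which gives (i).

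For (ii) I would apply $S_A^t$ to the all-ones vector directly: its $j$th coordinate is $\sum_i S_A(i,j)=\#\{a_i\in\E_A:t(a_i)=I_j^A\}$, the in-degree of $I_j^A$, which equals $\sum_{j'}A(j',j)$. Thus $S_A^t(1,\dots,1)=A^t(1,\dots,1)$ in $\Z^N$, and since $(1,\dots,1)-A^t(1,\dots,1)=(\id-A^t)(1,\dots,1)$ lies in $(\id-A^t)\Z^N$, we obtain $[A^t(1,\dots,1)]=[(1,\dots,1)]$, i.e. $\Phi_{S^t_A}([(1,\dots,1)])=[(1,\dots,1)]$.

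The one place that needs care is the combinatorial count in (i): one must check that fixing the common $D$-edge $d(a_i)$ really parametrizes the relevant edges of $\E_B$ by the $C$-edges out of $t(a_i)$, with multiplicities tracked correctly when $C$ has entries larger than one. This is exactly the bookkeeping encoded in the relation $S_{d(a_i)}S_{d(a_i)}^*=\sum_l\hat D(i,l)S_{b_l}S_{b_l}^*$ used in the proof of Proposition \ref{prop:KTD}; once that identification is in hand both assertions are routine.
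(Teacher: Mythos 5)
Your proposal is correct, but there is nothing in this paper to compare it against: the lemma is quoted verbatim from \cite[Lemma 3.5]{MaPre2016} (with part (ii) credited to Matui), and no proof is reproduced here, so your argument serves as a self-contained substitute for the external citation. Your reduction to the two exact integer-matrix identities $S_B^t\hat{D}^t = C^tS_A^t$ and $S_A^t(1,\dots,1) = A^t(1,\dots,1)$ is sound. The entrywise count in (i) checks out: under the identification of $\E_B$ with the composable pairs $\{dc \in \E_D\E_C : t(d)=s(c)\}$ (a bijection precisely because $B=DC$ holds exactly), the edges $b_l$ with $d(b_l)=d(a_i)$ and $t(b_l)=I_k^B$ correspond one-to-one to the $C$-edges from $t(d(a_i))=t(a_i)$ to $I_k^B$, so both sides equal $C(t(a_i),I_k^B)$; this bijection already absorbs the multiplicity issue you flag at the end, and it is indeed the same bookkeeping as the relation $S_{d(a_i)}S_{d(a_i)}^* = \sum_l \hat{D}(i,l)S_{b_l}S_{b_l}^*$ invoked in Proposition \ref{prop:KTD}. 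In (ii), the $j$th entry of $S_A^t(1,\dots,1)$ is the in-degree of $I_j^A$, which is $(A^t(1,\dots,1))_j$, and $(1,\dots,1)-A^t(1,\dots,1) = (\id-A^t)(1,\dots,1)$ kills the difference in the quotient. Two points worth making explicit in a write-up: first, you tacitly use that all four maps are well defined on the quotients (the paper records this just before the lemma, via $C^tA^t=B^tC^t$ and the corresponding facts for $\hat{D}^t$, $S_A^t$, $S_B^t$), after which equality of the inducing matrices on $\Z^{N_A}$ immediately gives equality of the composed homomorphisms; second, your literal matrix equality is stronger than the congruence modulo $(\id-B^t)\Z^M$ that would suffice, which is why no boundary terms ever need to be tracked.
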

%
Let us denote by 
$\epsilon_A $
the isomorphism 
$ \Phi_{S_A^t} \circ \epsilon_{A^G}: 
K_0(\OA) \rightarrow  \Z^{N}/{(\id - {A}^{t})\Z^{N}}$,
which is an isomorphism satisfying 
$\epsilon_A([1_A]) =[(1,1,\dots,1)].$ 
We thus obtain the following theorem:
\begin{theorem}\label{thm:KC}
Let $A, B$ be nonnegative square matrices 
both of which are irreducible and not any permutations.  
Suppose that they are elementary equivalent such that 
$A= CD, B= DC$ for some nonnegative rectangular matrices $C, D$.
Then the diagram
$$
\begin{CD}
K_0({\OA}) @>\Phi_* >> K_0({\OB}) \\
@V{\epsilon_{A} }VV  @VV{\epsilon_{B}}V \\
\Z^{N}/{(\id - {A}^{t})\Z^{N}} @> \Phi_{C^t} >> \Z^{M}/{(\id - {B}^t)\Z^{M}} 
\end{CD}
$$
 of isomorphisms is commutative.
\end{theorem}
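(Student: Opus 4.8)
The plan is to prove the commutativity by a formal diagram chase, assembling three ingredients that are already in hand: Proposition~\ref{prop:KTD}, Lemma~\ref{lem:Matuilemma}(i), and the definitions $\epsilon_A = \Phi_{S_A^t}\circ\epsilon_{A^G}$ and $\epsilon_B = \Phi_{S_B^t}\circ\epsilon_{B^G}$. Concretely, I want to verify the single identity $\epsilon_B\circ\Phi_* = \Phi_{C^t}\circ\epsilon_A$ on $K_0(\OA)=K_0(\OAG)$, which is precisely what the square in the statement asserts.

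First I would record the identity that the computation of Proposition~\ref{prop:KTD} actually supplies at the level of the graph transition matrices, namely $\epsilon_{B^G}\circ\Phi_* = \Phi_{\hat{D}^t}\circ\epsilon_{A^G}$. This reads off the established formula $\Phi_*([S_{a_i}S_{a_i}^*]) = \sum_{l} \hat{D}(i,l)[S_{b_l}S_{b_l}^*]$: applying $\epsilon_{B^G}$ to the right-hand side gives $\sum_l \hat{D}(i,l)[e_l]$, which is exactly $\Phi_{\hat{D}^t}([e_i]) = \Phi_{\hat{D}^t}(\epsilon_{A^G}([S_{a_i}S_{a_i}^*]))$. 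Since the classes $[S_{a_i}S_{a_i}^*\otimes p_1]$ generate $K_0(\OAG)$, this determines the equality on all of $K_0$.

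Then I would simply compose. Starting from $\epsilon_B\circ\Phi_*$, I substitute the definition of $\epsilon_B$, rewrite $\epsilon_{B^G}\circ\Phi_*$ by the identity just recorded, invoke Lemma~\ref{lem:Matuilemma}(i) to replace $\Phi_{S_B^t}\circ\Phi_{\hat{D}^t}$ by $\Phi_{C^t}\circ\Phi_{S_A^t}$, and finally recognize $\Phi_{S_A^t}\circ\epsilon_{A^G}$ as $\epsilon_A$. This yields the chain
\begin{equation*}
\epsilon_B\circ\Phi_* = \Phi_{S_B^t}\circ\epsilon_{B^G}\circ\Phi_* = \Phi_{S_B^t}\circ\Phi_{\hat{D}^t}\circ\epsilon_{A^G} = \Phi_{C^t}\circ\Phi_{S_A^t}\circ\epsilon_{A^G} = \Phi_{C^t}\circ\epsilon_A,
\end{equation*}
which is exactly the asserted commutativity.

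Since every step is an application of a previously established identity, there is no genuine obstacle in this theorem itself; all the analytic and K-theoretic content already lives in Proposition~\ref{prop:KTD} (the concrete evaluation of $\Phi_*$ on the canonical generators) and in the matrix-level Lemma~\ref{lem:Matuilemma}(i). The only point demanding care is the bookkeeping with the various change-of-basis isomorphisms $\Phi_{S_A^t}$ and $\Phi_{S_B^t}$ connecting the edge-indexed groups $\Z^{N_A}/(\id-(A^G)^t)\Z^{N_A}$ to the vertex-indexed groups $\Z^{N}/(\id-A^t)\Z^{N}$, together with keeping straight the direction of the identity coming from Proposition~\ref{prop:KTD}.
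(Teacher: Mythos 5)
Your proof is correct and is essentially the paper's own argument: the paper presents Theorem \ref{thm:KC} as an immediate consequence of Proposition \ref{prop:KTD}, Lemma \ref{lem:Matuilemma}(i), and the definitions $\epsilon_A = \Phi_{S_A^t}\circ\epsilon_{A^G}$, $\epsilon_B = \Phi_{S_B^t}\circ\epsilon_{B^G}$, which is precisely the composition chain you assembled. In particular, your reading of Proposition \ref{prop:KTD} as the identity $\epsilon_{B^G}\circ\Phi_* = \Phi_{\hat{D}^t}\circ\epsilon_{A^G}$ (rather than the literally stated, type-mismatched formula) is exactly what the computation in its proof establishes.
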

Recall two nonnegative matrices 
$A, B$ are said to be strong shift equivalent if they are connected 
by a finite chain of elementary equivalences such as
\eqref{eq:SSE}.
Then we have an isomorphism 
$
\Phi_{{(C_1 C_2 \cdots C_n)}^t}:
\Z^{N}/{(\id - {A}^{t})\Z^{N}} 
\longrightarrow
 \Z^{M}/{(\id - {B}^t)\Z^{M}} 
$
which is induced  by  the left multiplication
of the matrix 
$
{(C_1 C_2 \cdots C_n)}^t:
\Z^{N} 
\longrightarrow
 \Z^{M} 
$
whose  inverse is given by 
 $\Phi_{{(D_n \cdots D_2 D_1)}^t}:
 \Z^{M}/{(\id - {B}^t)\Z^{M}}\rightarrow 
\Z^{N}/{(\id - {A}^{t})\Z^{N}}.
$
We thus have the following corollary.
\begin{corollary}\label{cor:KC}
Suppose that
nonnegative irreducible  matrices $A, B$ are strong shift equivalent in $n$-step
such that 
$
A \underset{C_1,D_1}{\approx}\cdots \underset{C_{n},D_{n}}{\approx}B 
$
for some  nonnegative rectangular matrices
$ C_1, \dots, C_n$ 
and 
$D_1, \dots,D_n$.
Then there exists an isomorphism
$\Phi:\SOA \rightarrow \SOB $ of $C^*$-algebras such that
\begin{equation*}
\Phi(\SDA) = \SDB, \qquad
\Phi \circ (\rho^{A}_t \otimes\id) = (\rho^{B}_t \otimes \id) \circ \Phi,
\end{equation*}
and the following diagram of isomorphisms is commutative
$$
\begin{CD}
K_0({\OA}) @>\Phi_* >> K_0({\OB}) \\
@V{\epsilon_{A} }VV  @VV{\epsilon_{B}}V \\
\Z^{N}/{(\id - {A}^{t})\Z^{N}} 
@> \Phi_{{(C_1 C_2 \cdots C_n)}^t}>> \Z^{M}/{(\id - {B}^t)\Z^{M}} 
\end{CD}.
$$
\end{corollary}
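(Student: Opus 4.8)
The plan is to deduce the corollary from the single-step statements in Theorem \ref{thm:main1} and Theorem \ref{thm:KC} by composing along the chain \eqref{eq:SSE}. First I would record the strong shift equivalence as its defining sequence of elementary equivalences: write $A = A_0, A_1, \dots, A_n = B$ with $A_{i-1} = C_i D_i$ and $A_i = D_i C_i$ for $i = 1, \dots, n$, so that each consecutive pair $(A_{i-1}, A_i)$ is elementary equivalent through the rectangular matrices $C_i, D_i$. Each intermediate $A_i$ is again nonnegative and irreducible, so Theorem \ref{thm:main1} applies at every step.

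For each $i$, Theorem \ref{thm:main1} provides an isomorphism $\Phi_i: \mathcal{O}_{A_{i-1}}\otimes\K \to \mathcal{O}_{A_i}\otimes\K$ with $\Phi_i(\mathcal{D}_{A_{i-1}}\otimes\C) = \mathcal{D}_{A_i}\otimes\C$ and $\Phi_i \circ (\rho^{A_{i-1}}_t \otimes \id) = (\rho^{A_i}_t \otimes \id) \circ \Phi_i$. I would then set $\Phi := \Phi_n \circ \cdots \circ \Phi_1: \SOA \to \SOB$. Since the class of diagonal-preserving, gauge-intertwining isomorphisms is closed under composition, $\Phi$ at once satisfies $\Phi(\SDA) = \SDB$ and $\Phi \circ (\rho^A_t \otimes \id) = (\rho^B_t \otimes \id) \circ \Phi$ for all $t \in \T$; this requires nothing beyond chaining the single-step identities.

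For the K-theoretic diagram I would apply Theorem \ref{thm:KC} to each elementary equivalence, obtaining for every $i$ a commutative square relating $(\Phi_i)_*$ on the top to $\Phi_{C_i^t}$ on the bottom via the vertical isomorphisms $\epsilon_{A_{i-1}}$ and $\epsilon_{A_i}$. Because the right vertical map $\epsilon_{A_i}$ of the $i$-th square is the left vertical map of the $(i+1)$-th, these $n$ squares paste along their shared edges into a single commutative diagram whose top map is $\Phi_* = (\Phi_n)_* \circ \cdots \circ (\Phi_1)_*$ and whose bottom map is $\Phi_{C_n^t} \circ \cdots \circ \Phi_{C_1^t}$; telescoping gives $\epsilon_B \circ \Phi_* = (\Phi_{C_n^t} \circ \cdots \circ \Phi_{C_1^t}) \circ \epsilon_A$. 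It remains to identify this composite with $\Phi_{(C_1 C_2 \cdots C_n)^t}$. Each $\Phi_{C_i^t}$ is the map on quotient groups induced by left multiplication by $C_i^t$, and it descends to the quotients because $A_{i-1} C_i = C_i A_i$ forces $C_i^t A_{i-1}^t = A_i^t C_i^t$; hence the composite is induced by the matrix product $C_n^t \cdots C_2^t C_1^t = (C_1 C_2 \cdots C_n)^t$, which is exactly $\Phi_{(C_1 C_2 \cdots C_n)^t}$. The same bookkeeping applied to the $D_i$ shows its inverse is $\Phi_{(D_n \cdots D_2 D_1)^t}$.

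The argument has no genuine obstacle; the only point requiring attention is the order of the product under transposition. Reading the composition from the bottom of the first square (carrying $C_1^t$) up to the last (carrying $C_n^t$) produces $C_n^t \cdots C_1^t$, which equals $(C_1 \cdots C_n)^t$ rather than $(C_n \cdots C_1)^t$. Everything else is a routine pasting of commutative squares and composition of intertwining isomorphisms.
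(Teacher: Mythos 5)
Your proof is correct and follows essentially the same route as the paper: the paper obtains the corollary precisely by composing the one-step isomorphisms of Theorem \ref{thm:main1} along the chain of elementary equivalences and pasting the commutative squares of Theorem \ref{thm:KC}, identifying the composite bottom map with $\Phi_{(C_1 C_2\cdots C_n)^t}$ (with inverse $\Phi_{(D_n\cdots D_2 D_1)^t}$) exactly as you do, including the order-reversal under transposition. The one point you assert without justification --- that each intermediate matrix $A_i$ is again irreducible and non-permutation, which is needed to invoke Theorem \ref{thm:main1} at every step --- is an assumption the paper makes tacitly as well, so it does not distinguish your argument from the paper's.
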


\section{State splitting}

State splitting is a key procedure for 
constructing new directed graphs from an original directed graph in dividing topological conjugacy of topological Markov shifts.
The transition matrices of state splitting graphs give rise to strong shift equivalent matrices.
In \cite{Williams}, 
Williams has actually proved his strong shift equivalence theorem by decomposing
given directed graphs into finite sequence of state splittings.
Let $G =(\V,\E)$ be a finite directed graph.
Each element $I$ of $\V$ is a vertex of $G$ which we call a state instead of vertex.
We have two kinds of sate splitting procedures. 
One is out-splitting and the other is in-splitting. 
The former uses a partition of out-going edges from states,
whereas the latter uses a partition of in-coming edges to states.  
We fix a finite directed graph $G =(\V,\E)$ for a while.
For a vertex $I\in \V$,
recall that  $\E^I$ (resp. $\E_I$) denotes the set of edges in $\E$ 
whose terminals (resp. sources) are $I$, that is
\begin{equation*}
\E^I = \{ e \in \E \mid t(e) = I\}, \qquad
\E_I = \{ e \in \E \mid s(e) = I\}.
\end{equation*}
We note that the graph $C^*$-algebras constructed from state splitting graphs  
have been studied 
by several authors (see \cite{Bates}, \cite{BP},  \cite{MaETDS2004}, \cite{MPT}, \cite{Tomforde}, etc.).
\subsection{Out-splitting}
  We will first  explain out-splitting graph from $G$
following \cite[Definition 2.4.3]{LM}
(see also \cite{Kitchens}).
For each state $I\in \V$, 
we take a partition
of $\E_I$ such as
$\E_I = \E_I^1\cup\E_I^2\cup \cdots \cup \E_I^{m(I)}$,
which are disjoint partition.
We denote by $\mathP_I$ the partition of $\E_I$.
The whole partition  $\{ \mathP_I\}_{I \in \V}$ is denoted by $\mathP$.
 We construct a new graph $G^{[\mathP]} =(\V^{[\mathP]},\E^{[\mathP]})$
such as
$\V^{[\mathP]} = \cup_{I\in \V} \{ I^1, I^2, \dots,I^{m(I)}\}$.
For $e \in \E_I$
 which belongs to $e \in \E_I^i$ for some $i = 1,2,\dots,m(I)$
such that $t(e) = J$,
 then define new edges
$e^1,e^2,\dots,e^{m(J)}$ such that
$s(e^j) = I^i, \,  t(e^j) = J^j$ for all $j=1,2,\dots,m(J)$.
The set of such edges is $\E^{[\mathP]}.$
The resulting directed graph
$G^{[\mathP]} =(\V^{[\mathP]},\E^{[\mathP]})$
is called the {\it out-split graph formed from $G$ using $\mathP$}.
We note that if $G$ is irreducible, then the graph 
$G^{[\mathP]}$ is still irreducible (\cite[Exercise 2.4.5]{LM}).
The converse procedure to get a graph $G$ from $G^{[\mathP]}$
is called {\it out-amalgamation} (see \cite{LM}, \cite{Kitchens}).

In the above procedure to construct the new graph $G^{[\mathP]}$,
a bipartite graph 
$\hat{G}^{[\mathP]} =(\hat{\V}^{[\mathP]},\hat{\E}^{[\mathP]})$
is associated to it as follows.
Let 
$\hat{\V}^{[\mathP]} = \V\cup\V^{[\mathP]}$.
Define two kinds of edges such that
for the partition
$\E_I = \E_I^1\cup\E_I^2\cup \cdots \cup \E_I^{m(I)}$
the edge $i^n$ is defined such that
$s(i^n) = I, \,  t(i^n) = I^n$ for all $n=1,2,\dots,m(I)$.
For an edge $e \in \E_I^n$ such that $t(e) =J\in \V$,
the edge 
$\bar{e}$ is defined such that 
$s(\bar{e}) = I^n,\, t(\bar{e}) = J$.
Then the set $\hat{\E}^{[\mathP]}$ consists of such edges, that is
\begin{equation}
\hat{\E}^{[\mathP]} =\{i^n \mid n=1,2,\dots,m(I), I \in \V\}
\cup \{ \bar{e} \mid e \in \E \}. \label{eq:hatEP}
\end{equation}
We define their transition matrices
$C^{[\mathP]}, D^{[\mathP]}$ by setting
\begin{equation*}
C^{[\mathP]}(I,J^k) = 
\begin{cases}
1 & \text{ if } I=J \\
0 & \text{ otherwise},
\end{cases}
\qquad
 D^{[\mathP]}(I^n,J) = |\E_I^n \cap \E^J|.
\end{equation*}
Let us denote by 
$A^{{[\mathP]}}$
the transition matrix of the graph  
$G^{[\mathP]}$.
Then we have
\begin{equation*}
A = C^{[\mathP]}D^{[\mathP]}, \qquad
A^{[\mathP]} = D^{[\mathP]}C^{[\mathP]}.
\end{equation*}
We set the matrix
\begin{equation*}
\AGHP
=
\begin{bmatrix}
0 & C^{{[\mathP]}} \\
D^{{[\mathP]}} & 0
\end{bmatrix}
\end{equation*}
which is the transition matrix of the 
bipartite graph $\hat{G}^{[\mathP]}$.
Let 
$\OAGHP$ be the Cuntz--Krieger algebra for the matrix $\AGHP$.
Let
\begin{equation*}
S_{i^n }, \quad S_{\bar{e}} \quad 
\text{ for }
n=1,2,\dots,m(I), I \in \V, \,
e \in \E
\end{equation*}
be the canonical generating partial isometries for the algebra $\OAGHP$
assigned by the edges \eqref{eq:hatEP}
in the bipartite graph $\hat{G}^{[\mathP]}$. 
We set projections $P_A, P_{\AGP} $ in $\OAGHP$
\begin{equation*}
P_A = \sum_{e \in \E}S_{\bar{e}}S_{\bar{e}}^*, \qquad
P_{\AGP} = \sum_{i^n} S_{i^n }S_{i^n }^*.
\end{equation*}
Since the graph $\hat{G}^{[\mathP]}$ is bipartite,
as in \eqref{eq:4.1}, we know that 
\begin{gather}
P_A \OAGHP P_A = \OA, \qquad P_{\AGP} \OAGHP P_{\AGP} = \OAGP,
\label{eq:PAPAGP1} \\
\DAGHP P_A = \DA, \qquad  \DAGHP P_{\AGP} = \DAGP. \label{eq:PAPAGP2}
\end{gather}
We set a partial isometry $V^{[\mathP]}$ in $\OAGHP$
\begin{equation}
V^{[\mathP]}= \sum_{i^n} S_{i^n }.
\end{equation}
We then have
\begin{lemma}
$V^{[\mathP]} V^{[\mathP]*} = P_{\AGP}$ and 
$V^{[\mathP]*} V^{[\mathP]} = P_A.
$
\end{lemma}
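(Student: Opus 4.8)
The plan is to expand both products as double sums over the edges $i^n$ and to kill the off-diagonal terms using the two structural features of $\OAGHP$: the range projections $\{S_e S_e^*\}_e$ are mutually orthogonal (this is forced by $\sum_e S_e S_e^* = 1$), and the initial projection $S_e^* S_e = \sum_{s(f)=t(e)} S_f S_f^*$ is the ``vertex projection'' sitting at the target $t(e)$. From these two facts one reads off the elementary relations $S_e^* S_f = \delta_{e,f}\,S_e^* S_e$ and $S_e S_f^* = S_e\,(S_e^* S_e)(S_f^* S_f)\,S_f^*$; in particular $S_e S_f^* = 0$ whenever $t(e)\neq t(f)$, since then the initial projections $S_e^* S_e$ and $S_f^* S_f$ are supported on disjoint families of mutually orthogonal range projections and are hence orthogonal.

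For the first identity I would compute $V^{[\mathP]} V^{[\mathP]*} = \sum_{i^n}\sum_{j^k} S_{i^n} S_{j^k}^*$. In the bipartite graph $\hat{G}^{[\mathP]}$ the only edge terminating at the split vertex $I^n$ is $i^n$ itself, so the targets $t(i^n)=I^n$ are pairwise distinct; every cross term therefore vanishes by the observation above, and the sum collapses to $\sum_{i^n} S_{i^n} S_{i^n}^* = P_{\AGP}$.

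For the second identity I would expand $V^{[\mathP]*} V^{[\mathP]} = \sum_{i^n}\sum_{j^k} S_{i^n}^* S_{j^k}$, where now the off-diagonal terms drop out because $S_{i^n}^* S_{j^k}=0$ unless $i^n = j^k$. Applying the Cuntz--Krieger relation $S_{i^n}^* S_{i^n} = \sum_{e \in \E_I^n} S_{\bar e} S_{\bar e}^*$ — valid because the incoming edges to $I^n$ are exactly the $\bar e$ with $e\in\E_I^n$ — leaves $\sum_{I}\sum_{n=1}^{m(I)}\sum_{e\in\E_I^n} S_{\bar e} S_{\bar e}^*$. Since $\{\E_I^n\}_n$ partitions $\E_I$ and $\E = \bigcup_I \E_I$, this triple sum runs over each $e\in\E$ exactly once, yielding $\sum_{e\in\E} S_{\bar e} S_{\bar e}^* = P_A$.

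The computation is short and the real content is purely bookkeeping about the bipartite edge set $\hat{\E}^{[\mathP]}$: the first identity hinges on the fact that distinct split vertices $I^n$ have distinct (singleton) preimages under $t$, while the second hinges on the partition $\{\E_I^n\}_{I,n}$ reproducing $\E$ exactly via the initial projections $S_{i^n}^* S_{i^n}$. I do not expect a genuine obstacle here; the only care needed is to cite the orthogonality relations in the correct form and to match the edge indexing of $\hat{G}^{[\mathP]}$.
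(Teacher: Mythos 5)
Your proof is correct and takes essentially the same route as the paper: expand both products as double sums, kill the off-diagonal terms using orthogonality forced by the Cuntz--Krieger relations (for $V^{[\mathP]}V^{[\mathP]*}$ via the fact that each split vertex $I^n$ receives only the single edge $i^n$), and identify the diagonal sums with $P_{\AGP}$ and $P_A$ through the relation $S_{i^n}^*S_{i^n}=\sum_{e\in\E_I^n}S_{\bar e}S_{\bar e}^*$ and the partition bookkeeping. Your treatment of $V^{[\mathP]*}V^{[\mathP]}$ is organized slightly more directly than the paper's (which inserts $S_{\bar e}S_{\bar e}^*$ factors before collapsing), but it is the same computation in substance.
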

\begin{proof}
We have
\begin{equation*}
V^{[\mathP]} V^{[\mathP]*}
= \sum_{i^n, j^k} S_{i^n }S_{j^k}^*.
\end{equation*}
Since 
$S_{i^n }^*S_{i^n } \cdot S_{j^k}^*S_{j^k} =0$
if  $t(i^n) \ne t(j^k)$,
we know that 
$S_{i^n }S_{j^k}^* =0$ if $i^n \ne j^k$.
Hence we  have
\begin{equation*}
V^{[\mathP]} V^{[\mathP]*}
= \sum_{i^n} S_{i^n }S_{i^n}^*
= P_{\AGP}.
\end{equation*}
As $S_{i^n} =S_{i^n} S_{\bar{e}}S_{\bar{e}}^*$ 
for $t(i^n) = s(\bar{e})$,
we have
\begin{align*}
V^{[\mathP]*} V^{[\mathP]}
=& \sum_{t(i^n) = s(\bar{e})} \sum_{t(j^k) = s(\bar{f})}
S_{\bar{e}}S_{\bar{e}}^* S_{i^n}^* S_{j^k}S_{\bar{f}}S_{\bar{f}}^* \\
=& \sum_{t(i^n) = s(\bar{e})} 
S_{\bar{e}}S_{\bar{e}}^* S_{i^n}^* S_{i^n}S_{\bar{e}}S_{\bar{e}}^* \\
=& \sum_{\bar{e}} 
S_{\bar{e}}S_{\bar{e}}^*  =P_A.
\end{align*}
\end{proof}
Hence we obtain
\begin{theorem}\label{thm:mainout}
Let $A$ be an irreducible nonnegative matrix and $G_A$ its directed graph.
Suppose that $\AGP$is the transition matrix of  the out-split graph of $G_A$ 
by a partition $\mathP$ of out-going edges of $G_A$.
Then there exists an isomorphism
$\Phi^{[\mathP]}:\OA \rightarrow \OAGP$ satisfying 
$\Phi^{[\mathP]}(\DA) = \DAGP$ such that
\begin{equation}
\Phi^{[\mathP]}  \circ \rho^{A}_t
 = \rho^{\AGP}_t \circ \Phi^{[\mathP]}
\quad
\text{ for } 
 t \in \T. \label{eq:PhirhoOut}
\end{equation}
\end{theorem}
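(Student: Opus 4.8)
The plan is to imitate the construction of Section 2 verbatim, the only---but decisive---difference being that in the out-split situation no stabilization is required. Concretely, I would set $\Phi^{[\mathP]} = \Ad(V^{[\mathP]})$, that is $\Phi^{[\mathP]}(x) = V^{[\mathP]} x V^{[\mathP]*}$. The reason the compact operators $\K$ never enter is that on the $C^{[\mathP]}$-side each split state $I^n \in \V^{[\mathP]}$ receives exactly one edge, namely $i^n$, since $C^{[\mathP]}(I,J^k)=\delta_{I,J}$; hence in Lemma \ref{lem:isomet} one always has $k=1$ and may take the auxiliary isometries to be $1$. Thus the partial isometry $V^{[\mathP]} = \sum_{i^n} S_{i^n}$ already lives inside $\OAGHP$ itself and plays exactly the role that $V$ played in $\OZ \otimes B(\ell^2(\N))$.

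Granting the preceding lemma, which gives $V^{[\mathP]} V^{[\mathP]*} = P_{\AGP}$ and $V^{[\mathP]*} V^{[\mathP]} = P_A$, the map $\Ad(V^{[\mathP]})$ restricts on the corner $P_A \OAGHP P_A$ to an isomorphism onto $P_{\AGP} \OAGHP P_{\AGP}$, with inverse $y \mapsto V^{[\mathP]*} y V^{[\mathP]}$; by \eqref{eq:PAPAGP1} this is an isomorphism $\Phi^{[\mathP]} : \OA \to \OAGP$. For the diagonals I would check that $\Ad(V^{[\mathP]})$ preserves $\DAGHP$. Here the uniqueness of the incoming $C^{[\mathP]}$-edge is again what makes things work: for a path $\mu$ starting with a $D^{[\mathP]}$-edge, only the single $i^n$ with $t(i^n)=s(\mu)$ contributes, so all cross terms vanish and
\begin{equation*}
V^{[\mathP]} S_\mu S_\mu^* V^{[\mathP]*} = S_{i^n} S_\mu S_\mu^* S_{i^n}^* = S_{i^n \mu} S_{i^n \mu}^* \in \DAGHP .
\end{equation*}
Together with the reverse inclusion and \eqref{eq:PAPAGP2}, this yields $\Phi^{[\mathP]}(\DA) = \DAGP$.

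For the gauge action I would exploit that $V^{[\mathP]} = \sum_{i^n} S_{i^n}$ is a sum of generators, hence homogeneous of degree one, $\rho^{\AGHP}_s(V^{[\mathP]}) = e^{2\pi\sqrt{-1}s} V^{[\mathP]}$; consequently $\Ad(V^{[\mathP]})$ commutes with $\rho^{\AGHP}_s$ for every $s \in \T$, the two phases $e^{\pm 2\pi\sqrt{-1}s}$ cancelling. Since each generator of $\OA$ (resp. $\OAGP$) is a word of bipartite length two, the corners $\OA$ and $\OAGP$ sit in even $\rho^{\AGHP}$-degree, so the intrinsic gauge actions are recovered as the well-defined half-speed restrictions $\rho^A_t = \rho^{\AGHP}_{t/2}|_{\OA}$ and $\rho^{\AGP}_t = \rho^{\AGHP}_{t/2}|_{\OAGP}$. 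Restricting the commutation $\Ad(V^{[\mathP]}) \circ \rho^{\AGHP}_{t/2} = \rho^{\AGHP}_{t/2} \circ \Ad(V^{[\mathP]})$ to $\OA$ then gives \eqref{eq:PhirhoOut}; alternatively one can reproduce the generator-by-generator computation of Theorem \ref{thm:main1}. The one point demanding care---and the whole content of this theorem as opposed to Theorem \ref{thm:main1}---is precisely the verification that the auxiliary isometries may be taken trivial, so that $V^{[\mathP]} \in \OAGHP$ and no stabilization by $\K$ is needed; everything else is a transcription of the elementary-equivalence argument.
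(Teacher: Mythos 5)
Your proposal is correct and takes essentially the same route as the paper's own proof: the paper likewise sets $\Phi^{[\mathP]} = \Ad(V^{[\mathP]})$ with $V^{[\mathP]} = \sum_{i^n} S_{i^n}$, restricts it to the corner via the identifications \eqref{eq:PAPAGP1} and \eqref{eq:PAPAGP2} together with the lemma $V^{[\mathP]}V^{[\mathP]*} = P_{\AGP}$, $V^{[\mathP]*}V^{[\mathP]} = P_A$, and obtains \eqref{eq:PhirhoOut} from the degree-one property $\rho^{\AGHP}_t(V^{[\mathP]}) = e^{2\pi\sqrt{-1}t}V^{[\mathP]}$ combined with the half-speed restrictions ${\rho^{\AGHP}_t}|_{P_A\OAGHP P_A} = \rho^A_{2t}$ and ${\rho^{\AGHP}_t}|_{P_{\AGP}\OAGHP P_{\AGP}} = \rho^{\AGP}_{2t}$. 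Your explicit justification of why no stabilization is needed---each split vertex has a unique incoming edge $i^n$, so the auxiliary isometries of Lemma \ref{lem:isomet} may be taken to be $1$ and $V^{[\mathP]}$ lives in $\OAGHP$ itself---is precisely the point underlying the paper's construction, which the paper uses without comment.
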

\begin{proof}
Through the identifications
\eqref{eq:PAPAGP1} and 
\eqref{eq:PAPAGP2},
the restriction 
to $P_A\OAGHP P_A $
of the map
$x \in \OAGHP 
\longrightarrow 
V^{[\mathP]} x V^{[\mathP]*} \in
\OAGHP$
 yields an isomorphism
from
$\OA$ to $\OAGP$,
which we denote by $\Phi^{[\mathP]}$.
It is easy to see that
$V^{[\mathP]} \DA V^{[\mathP]*} = \DAGP$ so that 
$\Phi^{[\mathP]}(\DA) = \DAGP$.
As 
$\rho^{\AGHP}_t (V^{[\mathP]} ) = e^{2\pi\sqrt{-1}t} V^{[\mathP]}$
and
${\rho^{\AGHP}_t}|_{P_A\OAGHP P_A} = \rho^A_{2t}$
and
${\rho^{\AGHP}_t}|_{P_{\AGP}\OAGHP P_{\AGP}} = \rho^{\AGP}_{2t},$
we know that 
the equality
$
\Phi^{[\mathP]}  \circ \rho^{A}_t
 = \rho^{\AGP}_t \circ \Phi^{[\mathP]}.
$
\end{proof}
We note that Bates and Pask have shown that 
$\OA$ is isomorphic to $\OAGP$
(\cite[Theorem 3.2]{BP}).

\subsection{In-splitting}
  We will second explain in-splitting graph from $G$
following \cite[Definition 2.4.7]{LM}.
For each state $J\in \V$, 
we take a partition
of $\E^J$ such as
$\E^J = \E^J_1\cup\E^J_2\cup \cdots \cup \E^J_{m(J)}$,
which are disjoint partition.
We denote by $\mathP^J$ the partition of $\E^I$.
The whole partition  $\{ \mathP^J\}_{J \in \V}$ is denoted by $\mathP$.
 We construct a new graph 
$G_{[\mathP]} =(\V_{[\mathP]},\E_{[\mathP]})$
such as
$\V_{[\mathP]} = \cup_{J\in \V} \{ J_1, J_2, \dots, J_{m(J)}\}$.
For $e \in \E^J$
 which belongs to $e \in \E^J_j$ for some $j = 1,2,\dots,m(J)$
such that $s(e) = I$,
 then define new edges
$e_1,e_2,\dots,e_{m(J)}$ such that
$t(e_i) = J_j, \, s(e_i) = I_i$ for all $i=1,2,\dots,m(I)$.
The set of such edges is $\E_{[\mathP]}.$
The resulting directed graph
$G_{[\mathP]} =(\V_{[\mathP]},\E_{[\mathP]})$
is called the {\it in-split graph formed from $G$ using $\mathP$}.
We note that if $G$ is irreducible, then the graph 
$G_{[\mathP]}$ is still irreducible (\cite[Exercise 2.4.5]{LM}).
The converse procedure to get a graph $G$ from $G_{[\mathP]}$
is called {\it in-amalgamation} (see \cite{Kitchens}).

In the above procedure
to  construct the new graph $G_{[\mathP]}$,
we will construct a bipartite graph 
$\hat{G}_{[\mathP]} =(\hat{\V}_{[\mathP]},\hat{\E}_{[\mathP]})$
and its associated matrix in the following way.
Let 
$\hat{\V}_{[\mathP]} = \V\cup\V_{[\mathP]}$.
Define two kinds of edges such that
for the partition
$\E^J = \E^J_1\cup\E^J_2\cup \cdots \cup \E^J_{m(I)}$
the edge $j_n$ is defined such that
$t(j_n) = J, \, s(j_n) = J_n$ for all $n=1,2,\dots,m(J)$.
For an edge $e \in \E^J_n$ such that $s(e) =I\in \V$,
the edge 
$\hat{e}$ is defined such that 
$t(\hat{e}) = J_n, \,  s(\hat{e}) = I$.
Then the set $\hat{\E}_{[\mathP]}$ consists of such edges, that is
\begin{equation}
\hat{\E}_{[\mathP]} =\{j_n \mid n=1,2,\dots,m(J), J \in \V\}
\cup \{ \hat{e} \mid e \in \E \}. \label{eq:hatEIP}
\end{equation}
We define their transition matrices
$C_{[\mathP]}, D_{[\mathP]}$ by setting
\begin{equation}
C_{[\mathP]}(I,J_k) = 
\begin{cases}
1 & \text{ if } I=J \\
0 & \text{ otherwise},
\end{cases}
\qquad
 D_{[\mathP]}(J_n, I) = |\E^J_n \cap \E_I|.
\end{equation}
Let us denote by 
$A_{[\mathP]}$
the transition matrix of the graph  
$G_{[\mathP]}$.
Then we have
\begin{equation}
A = C_{[\mathP]}D_{[\mathP]}, \qquad
A_{[\mathP]} = D_{[\mathP]}C_{[\mathP]}.
\end{equation}
We set the matrix
\begin{equation}
\AGHIP
=
\begin{bmatrix}
0 & C_{[\mathP]} \\
D_{[\mathP]} & 0
\end{bmatrix}
\end{equation}
which is the transition matrix of the 
bipartite graph $\hat{G}_{[\mathP]}$.
Let 
$\OAGHIP$ be the Cuntz--Krieger algebra for the matrix $\AGHIP$.
Let
\begin{equation}
S_{j_n }, \quad S_{\hat{e}} \quad 
\text{ for }
n=1,2,\dots,m(J), J \in \V, \,
e \in \E
\end{equation}
be the canonical generating partial isometries for the algebra $\OAGIP$
assigned by the edges \eqref{eq:hatEIP}
in the bipartite graph $\hat{G}_{[\mathP]}$. 
We set projections $P_A, P_{\AGIP}$ in $\OAGHIP$
\begin{equation*}
P_A = \sum_{e \in \E}S_{\hat{e}}S_{\hat{e}}^*, \qquad
P_{\AGIP} = \sum_{j_n}S_{j_n }S_{j_n }^*.
\end{equation*}
Since the graph $\hat{G}_{[\mathP]}$ is bipartite,
as in \eqref{eq:4.1}, we know that
\begin{gather}
P_A \OAGHIP P_A = \OA, \qquad P_{\AGIP} \OAGHIP P_{\AGIP} = \OAGIP,
\label{eq:PAPAGIP1} \\
\DAGHIP P_A = \DA, \qquad  \DAGHIP P_{\AGIP} = \DAGIP. \label{eq:PAPAGIP2}
\end{gather}
For each $J \in \V$, as in Lemma \ref{lem:isomet},
we may assign a family 
$s_{j_n}, {j_n} \in \E^J $ of isometries on the Hilbert space $\ell^2(\N)$ such that 
\begin{equation*}
s_{j_n}^{*} s_{j_n} = 1, \quad
\sum_{{j_n} \in \E^J}s_{j_n} s_{j_n}^{*} = 1
\quad
\text{ and }
\quad
s_{j_n} \C s_{j_n}^{*} \subset \C,
\quad
s_{j_n}^{*} \C s_{j_n} \subset \C. 
\end{equation*} 
We set a partial isometry $V_{[\mathP]}$ 
in $\OAGHIP\otimes B(\ell^2(\N))$
\begin{equation}
V_{[\mathP]}= \sum_{j_n} S_{j_n }\otimes s_{j_n }^*.
\end{equation}
We have
\begin{lemma}
$
V_{[\mathP]}V_{[\mathP]}^* = P_{\AGIP}\otimes 1
$ 
and 
$V_{[\mathP]}^* V_{[\mathP]} = P_A\otimes 1.
$
\end{lemma}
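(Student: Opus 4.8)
The plan is to prove both identities by directly expanding the defining sum $V_{[\mathP]}=\sum_{j_n}S_{j_n}\otimes s_{j_n}^*$, in exact parallel with the proof of the out-splitting Lemma, and to pinpoint the one off-diagonal configuration that the stabilizing isometries $s_{j_n}$ are designed to annihilate. Throughout I would use two standard facts in $\OAGHIP$: that for two edges $e,f$ of the bipartite graph one has $S_eS_f^*\neq 0$ iff $S_e^*S_eS_f^*S_f\neq 0$, and that the initial projection $S_{j_n}^*S_{j_n}=\sum_{e\in\E_J}S_{\hat e}S_{\hat e}^*$ depends only on the terminal vertex $t(j_n)=J$ (by the Cuntz--Krieger relation for $\AGHIP$), while distinct edges have orthogonal final projections $S_eS_e^*$. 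On the isometry side, for a fixed $J$ the relations $s_{j_n}^*s_{j_n}=1$ and $\sum_{n}s_{j_n}s_{j_n}^*=1$ force $s_{j_n}^*s_{j_l}=0$ whenever $n\neq l$.

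For $V_{[\mathP]}V_{[\mathP]}^*=\sum_{j_n,k_l}S_{j_n}S_{k_l}^*\otimes s_{j_n}^*s_{k_l}$ I would split the off-diagonal terms $j_n\neq k_l$ into two cases. If the targets differ, $t(j_n)\neq t(k_l)$, then the initial projections $S_{j_n}^*S_{j_n}$ and $S_{k_l}^*S_{k_l}$ are orthogonal, hence $S_{j_n}S_{k_l}^*=0$ and the algebra factor kills the term. The genuinely new case---the one that does not occur in out-splitting---is $t(j_n)=t(k_l)=J$ with $n\neq l$: here the algebra factor $S_{j_n}S_{k_l}^*$ need not vanish, but the isometry factor $s_{j_n}^*s_{j_l}$ does, by the orthogonality noted above. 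The surviving diagonal terms give $\sum_{j_n}S_{j_n}S_{j_n}^*\otimes 1=P_{\AGIP}\otimes 1$.

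For $V_{[\mathP]}^*V_{[\mathP]}=\sum_{j_n,k_l}S_{j_n}^*S_{k_l}\otimes s_{j_n}s_{k_l}^*$, the off-diagonal terms vanish immediately because distinct edges $j_n\neq k_l$ have orthogonal final projections, so $S_{j_n}^*S_{k_l}=0$. The diagonal contribution is $\sum_{j_n}S_{j_n}^*S_{j_n}\otimes s_{j_n}s_{j_n}^*$; substituting $S_{j_n}^*S_{j_n}=\sum_{e\in\E_J}S_{\hat e}S_{\hat e}^*$ and summing the isometries over $n$ via $\sum_{n}s_{j_n}s_{j_n}^*=1$ collapses the tensor factor to $1$ and leaves $\sum_{J\in\V}\sum_{e\in\E_J}S_{\hat e}S_{\hat e}^*\otimes 1=\sum_{e\in\E}S_{\hat e}S_{\hat e}^*\otimes 1=P_A\otimes 1$. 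The only real obstacle is the coincident-target case in the first identity: it is precisely the reason stabilization is unavoidable here, whereas in out-splitting distinct edges always had distinct targets and no isometries were needed.
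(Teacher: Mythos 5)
Your proof is correct and follows essentially the same route as the paper's: expand $V_{[\mathP]}V_{[\mathP]}^*$ and $V_{[\mathP]}^*V_{[\mathP]}$, kill the off-diagonal terms using the algebra factor when the targets differ and the orthogonality $s_{j_n}^*s_{j_l}=0$ ($n\neq l$) when they coincide, then collapse the diagonal via $S_{j_n}^*S_{j_n}=\sum_{e\in\E_J}S_{\hat e}S_{\hat e}^*$ and $\sum_n s_{j_n}s_{j_n}^*=1$. Your explicit two-case split for $V_{[\mathP]}V_{[\mathP]}^*$ is exactly what the paper does (modulo a sign typo there, where ``$t(j)=t(j')$'' should read ``$t(j)\neq t(j')$''), so nothing further is needed.
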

\begin{proof}
We have
\begin{equation*}
V_{[\mathP]} V_{[\mathP]}^* 
= 
\sum_{j_n, j'_{n'}}
S_{j_n }S_{j'_{n'} }^*\otimes s_{j_n }^*s_{j'_{n'} }
\end{equation*}
Since 
$S_{j_n }^*S_{j_n } \cdot S_{j'_{n'}}^*S_{j'_{n'}} =0$
if 
$t(j) =t(j')$,
we see that 
$S_{j_n }S_{j'_{n'} }^* =0$ if  $j\ne j'$.
We also have 
$s_{j_n }^*s_{j_{n'}} \ne 0$
if and only if $n =n'$. 
Hence we have
\begin{equation*}
V_{[\mathP]}V_{[\mathP]}^* 
= 
\sum_{j_n}S_{j_n }S_{j_{n} }^*\otimes s_{j_n }^*s_{j_{n} }
= 
\sum_{j_n}S_{j_n }S_{j_{n} }^*\otimes 1
=P_{\AGIP}\otimes 1.
\end{equation*}
On the other hand, we have
\begin{equation*}
V_{[\mathP]}^* V_{[\mathP]} 
=
\sum_{j_n}\sum_{j'_{n'}}
S_{j_n }^*S_{j'_{n'} }\otimes s_{j_n }s_{j'_{n'} }^* 
= 
\sum_{j_n}
S_{j_n }^*S_{j_{n} }\otimes s_{j_n }s_{j_{n} }^*.
\end{equation*}
Since
$S_{j_n }^*S_{j_{n} } = S_{j'_{n'} }^*S_{j'_{n'} }$
if and only if 
$t(j_n) = t(j'_{n'}) =J$.
We may put
$Q_J =S_{j_n }^*S_{j_{n} } (= S_{j'_{n'} }^*S_{j'_{n'} })$.
Since
$\sum_{j_n \in \E^J}s_{j_n }s_{j_{n} }^*=1$,
we see that
$$
\sum_{j_n}
S_{j_n }^*S_{j_{n} }\otimes s_{j_n }s_{j_{n} }^*
 =\sum_{ J \in \V} Q_J\otimes 1
$$
As $Q_J =\sum_{e \in \E_J}S_{\hat{e}}S_{\hat{e}}^*$,
we have
$
V_{[\mathP]}^* V_{[\mathP]} 
 =  P_A \otimes 1.
$
\end{proof}
Hence we have
\begin{theorem}\label{thm:mainin}
Let $A$ be an irreducible nonnegative matrix and $G_A$ its directed graph.
Suppose that $\AGIP$is the transition matrix of the  in-split graph of $G_A$ 
by a partition $\mathP$ of in-coming edges of $G_A$.
Then there exists an isomorphism
$\Phi_{[\mathP]}:\OA\otimes\K \rightarrow \OAGIP\otimes\K$ satisfying 
$\Phi_{[\mathP]}(\DA\otimes\C) = \DAGIP\otimes\C$ such that
\begin{equation}
\Phi_{[\mathP]}  \circ (\rho^{A}_t\otimes\id)
 = (\rho^{\AGIP}_t \otimes\id)\circ \Phi_{[\mathP]}
\quad
\text{ for } 
 t \in \T. \label{eq:PhirhoIn}
\end{equation}
\end{theorem}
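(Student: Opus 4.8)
The plan is to follow the template of Theorem~\ref{thm:mainout}, replacing the genuine partial isometry used there by the stabilized operator $V_{[\mathP]}=\sum_{j_n}S_{j_n}\otimes s_{j_n}^*$ and exploiting the relations established in the preceding lemma. First I would define $\Phi_{[\mathP]}$ to be the restriction of the conjugation map $z\mapsto V_{[\mathP]}\,z\,V_{[\mathP]}^*$ to the corner $P_A\OAGHIP P_A\otimes\K$. Since $V_{[\mathP]}$ lies in $\OAGHIP\otimes B(\ell^2(\N))\subset M(\OAGHIP\otimes\K)$ and satisfies $V_{[\mathP]}V_{[\mathP]}^*=P_{\AGIP}\otimes 1$ and $V_{[\mathP]}^*V_{[\mathP]}=P_A\otimes 1$ by the preceding lemma, this conjugation is a $*$-isomorphism carrying $P_A\OAGHIP P_A\otimes\K$ onto $P_{\AGIP}\OAGHIP P_{\AGIP}\otimes\K$; under the identifications \eqref{eq:PAPAGIP1} this is exactly an isomorphism $\Phi_{[\mathP]}:\OA\otimes\K\to\OAGIP\otimes\K$.

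Next I would verify $\Phi_{[\mathP]}(\DA\otimes\C)=\DAGIP\otimes\C$. This parallels the computation following Lemma~\ref{lem:isomet}: writing out $V_{[\mathP]}(d\otimes a)V_{[\mathP]}^*$ for $d\in\DA$ and $a\in\C$, the off-diagonal terms do not contribute and each remaining term has the form $S_{j_n}d\,S_{j_n}^*\otimes s_{j_n}^{*}a\,s_{j_n}$, which lies in $\DAGIP\otimes\C$ because the isometries $s_{j_n}$ were chosen via Lemma~\ref{lem:isomet} so that $s_{j_n}^*\C s_{j_n}\subset\C$ and because conjugation by the generators $S_{j_n}$ preserves the diagonal $\DAGHIP$. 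The reverse inclusion follows symmetrically by conjugating with $V_{[\mathP]}^*$ and using $s_{j_n}\C s_{j_n}^*\subset\C$, together with \eqref{eq:PAPAGIP2}.

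Finally, for the gauge intertwining I would record three facts, exactly paralleling the proof of Theorem~\ref{thm:mainout}: writing $\tilde\rho_t=\rho^{\AGHIP}_t\otimes\id$ on $\OAGHIP\otimes\K$, one has $\tilde\rho_t(V_{[\mathP]})=e^{2\pi\sqrt{-1}t}V_{[\mathP]}$ (since each $S_{j_n}$ acquires a single factor $e^{2\pi\sqrt{-1}t}$), while the bipartite structure gives $\tilde\rho_t|_{P_A\OAGHIP P_A}=\rho^A_{2t}$ and $\tilde\rho_t|_{P_{\AGIP}\OAGHIP P_{\AGIP}}=\rho^{\AGIP}_{2t}$. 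Conjugating, the phases $e^{\pm 2\pi\sqrt{-1}t}$ cancel, so for $y\in\OA\otimes\K$ we obtain $(\rho^{\AGIP}_{2t}\otimes\id)(\Phi_{[\mathP]}(y))=\tilde\rho_t(\Phi_{[\mathP]}(y))=\Phi_{[\mathP]}(\tilde\rho_t(y))=\Phi_{[\mathP]}((\rho^A_{2t}\otimes\id)(y))$. Because $t\mapsto 2t$ is surjective on $\T=\R/\Z$, this yields \eqref{eq:PhirhoIn} for all parameters.

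The step I expect to be the main obstacle is not the intertwining, which is formal, but understanding \emph{why} stabilization is unavoidable here whereas it was not in Theorem~\ref{thm:mainout}. In the out-split case the edges $i^n$ terminate at distinct vertices $I^n$, so $\sum_{i^n}S_{i^n}$ is already a partial isometry with no auxiliary factor. In the in-split case the edges $j_n$ (for fixed $j$, varying $n$) all terminate at the common vertex $J$, so $\sum_{j_n}S_{j_n}$ fails to be a partial isometry; the isometries $s_{j_n}$ on $\ell^2(\N)$ are introduced precisely to separate these terms, which forces the passage to $\OA\otimes\K$. The diagonal-compatibility relations built into Lemma~\ref{lem:isomet} are exactly what guarantees that this orthogonalization does not destroy the masa, and checking that these relations suffice is the delicate point of the argument.
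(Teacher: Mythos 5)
Your proposal is correct and follows essentially the same route as the paper's own proof: conjugation by $V_{[\mathP]}$ restricted to the corner $P_A\OAGHIP P_A\otimes\K$, using the lemma $V_{[\mathP]}V_{[\mathP]}^*=P_{\AGIP}\otimes 1$, $V_{[\mathP]}^*V_{[\mathP]}=P_A\otimes 1$, diagonal preservation, and the three gauge facts with the phase cancellation and the doubling $t\mapsto 2t$. Your closing remark on why stabilization is forced in the in-split case (the edges $j_n$ share the common target $J$, so $\sum_{j_n}S_{j_n}$ is not a partial isometry without the orthogonalizing isometries $s_{j_n}$) correctly identifies the point where this proof diverges from Theorem \ref{thm:mainout}.
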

\begin{proof}
Through the identifications
\eqref{eq:PAPAGIP1} and 
\eqref{eq:PAPAGIP2},
the restriction 
to
to $P_A\OAGHIP P_A \otimes\K  $
of the map
$x \in \OAGHIP\otimes\K 
\longrightarrow  
V_{[\mathP]} x V_{[\mathP]}^*  \in
\OAGHIP\otimes\K$
 yields an isomorphism
from
$\SOA$ to $\OAGIP\otimes\K$,
which we denote by $\Phi_{[\mathP]}$.
It is easy to see that
$V_{[\mathP]}(\SDA) V_{[\mathP]}^*  = \DAGIP\otimes\C$ 
so that 
$\Phi_{[\mathP]}(\SDA) = \DAGIP\otimes\C$.
As 
$\rho^{\AGHIP}_t (V_{[\mathP]} ) = e^{2\pi\sqrt{-1}t}V_{[\mathP]}$
and
${\rho^{\AGHIP}_t}|_{P_A\OAGHIP P_A} = \rho^A_{2t}$
and
${\rho^{\AGHIP}_t}|_{P_{\AGIP}\OAGHIP P_{\AGIP}} = \rho^{\AGIP}_{2t}$,
we know that 
the equality
$
\Phi_{[\mathP]}  \circ (\rho^{A}_t\otimes\id)
 = (\rho^{\AGIP}_t\otimes\id) \circ \Phi_{[\mathP]}.
$
\end{proof}
We note that Bates and Pask have shown that 
$\OA\otimes\K$ is isomorphic to $\OAGIP\otimes\K$
(\cite[Theorem 5.3]{BP}).
\section{Transpose free isomorphic Cuntz--Krieger triplets}
We call the triplet
$(\OA,\DA,\rho^A)$
the Cuntz--Krieger triplet and write it 
$\CKT_A$.
Two 
 Cuntz--Krieger triplets
$\CKT_A$ and $\CKT_B$ are said to be isomorphic 
if there exists an isomorphism
$\Phi:\OA\longrightarrow \OB$
such that 
$\Phi(\DA) = \DB$
and
$\Phi\circ\rho^A_t = \rho^B_t\circ\Phi, \, t \in \T$.
It has been proved that 
$\CKT_A$ and $\CKT_B$ are isomorphic
 if and only if 
their underlying one-sided topological Markov shifts
$(X_A,\sigma_A)$ and
$(X_B,\sigma_B)$
are eventually conjugate
(\cite{MaJOT2015}). 
We denote by
$\bar{\CKT}_A$
the pair 
$(\CKT_{A^t}, \CKT_A)$
of the Cuntz--Krieger triplets.
\begin{definition} 
$\bar{\CKT}_A$
and
$\bar{\CKT}_B$
are said to be {\it transpose free isomorphic in }$1$-{\it step}
if $\CKT_A$ and $\CKT_B$ are isomorphic
or
$\CKT_{A^t}$ and $\CKT_{B^t}$ are isomorphic.
We write this situation as 
$\bar{\CKT}_A\underset{T-1}{\approx}\bar{\CKT}_B$.
$\bar{\CKT}_A$
and
$\bar{\CKT}_B$
are said to be {\it transpose free isomorphic in }$n$-{\it step}
or simply {\it transpose free isomorphic} if 
there exists a finite sequence $A_0, A_1,\dots,A_{n-1}, A_n$ of nonnegative irreducible square matrices, where $A = A_0, A_n =B$, 
such that  
\begin{equation}
\bar{\CKT}_A =
\bar{\CKT}_{A_0}\underset{T-1}{\approx}
\bar{\CKT}_{A_1}\underset{T-1}{\approx}
\cdots
\underset{T-1}{\approx}
\bar{\CKT}_{A_{n-1}}
\underset{T-1}{\approx}
\bar{\CKT}_{A_{n}} =
\bar{\CKT}_B. \label{eq:transiso}
\end{equation}
We write this situation as 
$\bar{\CKT}_A\underset{T-n}{\approx}\bar{\CKT}_B$
or simply
$\bar{\CKT}_A{\approx}\bar{\CKT}_B$.
\end{definition}
Then we have the following theorem.
\begin{theorem}\label{thm:transfree}
Suppose $A,B$ are nonnegative irreducible matrices
that are not any permutation matrices.
Then two-sided topological Markov shifts 
$(\bar{X}_A,\bar{\sigma}_A)$ and $(\bar{X}_B,\bar{\sigma}_B)$
are topologically conjugate 
if and only if
$\bar{\CKT}_A$
and
$\bar{\CKT}_B$
are transpose free isomorphic.
\end{theorem}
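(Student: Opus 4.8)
The plan is to reduce both directions to single elementary moves. For the forward implication I would pass from topological conjugacy to Williams' decomposition of $G_A$ and $G_B$ into state splittings and amalgamations, and realize each out-splitting (and, via transposition, each in-splitting) as a triplet isomorphism through Theorem~\ref{thm:mainout}. For the converse I would start from a defining chain for $\bar{\CKT}_A \approx \bar{\CKT}_B$ and turn each relation $\underset{T-1}{\approx}$ into a topological conjugacy using the converse of Theorem~\ref{thm:CK}. Before beginning, note that out- and in-splitting preserve irreducibility; moreover each link of a transpose free chain is an isomorphism of the triplets attached to $A_i,A_{i+1}$ or to $A_i^t,A_{i+1}^t$, and since a permutation matrix (hence also its transpose) produces an algebra that is not simple and purely infinite, induction along the chain shows every matrix occurring below is irreducible and non-permutation, so all of the cited results remain applicable.

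Assume $(\bar{X}_A,\bar{\sigma}_A)$ and $(\bar{X}_B,\bar{\sigma}_B)$ are topologically conjugate. By Williams' theorem (\cite{Williams}, \cite{LM}) the graphs $G_A$ and $G_B$ are joined by a finite chain of out-splittings, in-splittings and their inverse amalgamations, and since $\underset{T-n}{\approx}$ is by definition the equivalence relation generated by $\underset{T-1}{\approx}$, it is enough to treat one elementary move. For an out-splitting carrying $A$ to $\AGP$, Theorem~\ref{thm:mainout} supplies an isomorphism realizing $\CKT_A \cong \CKT_{\AGP}$, whence $\bar{\CKT}_A \underset{T-1}{\approx} \bar{\CKT}_{\AGP}$ via the first clause of the definition; the inverse out-amalgamation is the same isomorphism read backwards. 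For an in-splitting carrying $A$ to $\AGIP$, the direct construction of Theorem~\ref{thm:mainin} yields only a stabilized isomorphism. Instead I would use that an in-split graph of $G_A$ is the out-split graph of the transposed graph $G_{A^t}$: concretely $(\AGIP)^t$ equals the out-split matrix $(A^t)^{[\mathP']}$ for the partition $\mathP'$ of the outgoing edges of $G_{A^t}$ corresponding to $\mathP$. Applying Theorem~\ref{thm:mainout} to $A^t$ then gives $\CKT_{A^t} \cong \CKT_{(\AGIP)^t}$, so $\bar{\CKT}_A \underset{T-1}{\approx} \bar{\CKT}_{\AGIP}$ via the second clause. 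Concatenating the moves gives $\bar{\CKT}_A \approx \bar{\CKT}_B$.

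Conversely, suppose $\bar{\CKT}_A \approx \bar{\CKT}_B$ and fix a chain as in \eqref{eq:transiso}. Since topological conjugacy of two-sided shifts is an equivalence relation, it suffices to show that each relation $\underset{T-1}{\approx}$ forces a conjugacy of the underlying two-sided shifts. If $\CKT_{A_i} \cong \CKT_{A_{i+1}}$ is implemented by an isomorphism $\Phi$, then $\Phi \otimes \id$ is an isomorphism $\mathcal{O}_{A_i}\otimes\K \to \mathcal{O}_{A_{i+1}}\otimes\K$ sending $\mathcal{D}_{A_i}\otimes\C$ onto $\mathcal{D}_{A_{i+1}}\otimes\C$ and intertwining $\rho^{A_i}_t\otimes\id$ with $\rho^{A_{i+1}}_t\otimes\id$; by the converse of Theorem~\ref{thm:CK} (Carlsen--Rout, \cite{CR}) the shifts $(\bar{X}_{A_i},\bar{\sigma}_{A_i})$ and $(\bar{X}_{A_{i+1}},\bar{\sigma}_{A_{i+1}})$ are topologically conjugate. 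If instead $\CKT_{A_i^t} \cong \CKT_{A_{i+1}^t}$, the same argument conjugates the two-sided shifts of $A_i^t$ and $A_{i+1}^t$; since $(\bar{X}_{C^t},\bar{\sigma}_{C^t})$ is conjugate to $(\bar{X}_C,\bar{\sigma}_C^{-1})$ for every $C$ (reverse the indexing of bi-infinite sequences), and a homeomorphism intertwining the inverse shifts also intertwines the shifts themselves, this descends to a conjugacy of $(\bar{X}_{A_i},\bar{\sigma}_{A_i})$ and $(\bar{X}_{A_{i+1}},\bar{\sigma}_{A_{i+1}})$. Composing along the chain finishes the argument.

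The main obstacle is the in-splitting case of the forward implication, which is exactly what dictates the transpose free formulation. The explicit bimodule construction applied to an in-splitting produces an isomorphism only after stabilization (Theorem~\ref{thm:mainin}), so it cannot by itself deliver the un-stabilized triplet isomorphism $\CKT_A \cong \CKT_{\AGIP}$ required by the first clause of $\underset{T-1}{\approx}$. The resolution is the combinatorial identity $(\AGIP)^t = (A^t)^{[\mathP']}$, which recasts an in-splitting as an out-splitting of the transposed graph and allows the un-stabilized Theorem~\ref{thm:mainout} to be applied to $A^t$. Verifying this identity --- matching the partition of incoming edges of $G_A$ with a partition of outgoing edges of $G_{A^t}$ and checking that the two transition matrices agree after transposition --- is routine but is the conceptual crux that makes both the definition of $\underset{T-1}{\approx}$ and the theorem work.
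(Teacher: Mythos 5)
Your proposal is correct, and its forward direction coincides with the paper's proof: Williams' decomposition into splittings and amalgamations, out-splittings handled by the unstabilized Theorem \ref{thm:mainout}, and in-splittings converted into out-splittings of the transposed graph --- precisely the point that dictates the transpose free formulation. Where you genuinely diverge is the converse. The paper stays inside the author's one-sided framework: an isomorphism $\CKT_{A_i}\cong\CKT_{A_{i+1}}$ gives eventual conjugacy of the one-sided shifts $(X_{A_i},\sigma_{A_i})$ and $(X_{A_{i+1}},\sigma_{A_{i+1}})$, which in turn gives topological conjugacy of the two-sided shifts, both cited to \cite{MaJOT2015}; the transpose cases are then reduced to this by the same sequence-reversal argument you use. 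You instead stabilize, observing that $\Phi\otimes\id$ is a diagonal-preserving, gauge-intertwining isomorphism from $\mathcal{O}_{A_i}\otimes\K$ onto $\mathcal{O}_{A_{i+1}}\otimes\K$, and invoke the Carlsen--Rout converse of Theorem \ref{thm:CK} from \cite{CR}. Both routes are legitimate (the paper itself records in the introduction that Carlsen--Rout proved that converse), but they buy different things: yours is shorter and needs a single external implication, at the cost of importing the groupoid-based result of \cite{CR}, which the paper's professedly $C^*$-algebraic, self-contained approach deliberately avoids; the paper's route keeps the entire argument within the author's own eventual-conjugacy machinery. One genuine improvement in your write-up is the observation that every intermediate matrix occurring in a transpose free chain is automatically non-permutation (an irreducible permutation matrix would yield a Cuntz--Krieger algebra that is not simple and purely infinite, contradicting the existence of the triplet isomorphism at each link), so the cited results do apply at every step; the paper's definition only demands irreducibility of the intermediate $A_i$ and its proof leaves this point implicit.
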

\begin{proof}
Suppose that
$(\bar{X}_A,\bar{\sigma}_A)$ and $(\bar{X}_B,\bar{\sigma}_B)$
are topologically conjugate. 
By  Williams' theorem \cite{Williams}, the underlying matrices
$A$ and $B$  are
 strong shift equivalent such that 
they are connected by a finite sequence of elementary equivalences
such as  
$A =A_0\approx A_1 \approx\cdots \approx A_{n-1} \approx A_n = B$
and their associated directed graphs
$G_{A_i}$ and $G_{A_{i+1}}$
are connected by one of the following four operations:

\hspace{17mm}out-splitting, \/ \/ out-amalgamation, \/ \/ in-splitting, \/\/ in-amalgamation. 

As the amalgamations are the converse operation of the splitting, we may consider 
only splittings.
If $G_{A_i}$ and $G_{A_{i+1}}$
are connected by out-splitting,
then by Theorem \ref{thm:mainout}, 
the Cuntz--Krieger triplets 
$\CKT_A$ and $\CKT_B$ are isomorphic.
If $G_{A_i}$ and $G_{A_{i+1}}$
are connected by in-splitting,
then 
their transposed graphs
$G_{A^t_i}$ and $G_{A^t_{i+1}}$
are connected by out-splitting,
so that 
the Cuntz--Krieger triplets 
$\CKT_{{A_i}^t}$ and $\CKT_{A_{i+1}^t}$ are isomorphic
and hence
$\bar{\CKT}_{A_i}$
and
$\bar{\CKT}_{A_{i+1}}$
are transpose free isomorphic in $1$-step.
Therefore we conclude that 
$\bar{\CKT}_A$
and
$\bar{\CKT}_B$
are transpose free isomorphic.

Conversely, 
suppose that
$\bar{\CKT}_A$
and
$\bar{\CKT}_B$
are transpose free isomorphic.
There exists a finite sequence $A_1,\dots,A_{n-1}$ 
of nonnegative irreducible square matrices
satisfying
\eqref{eq:transiso}
such that  
$\CKT_{{A_i}}$ and $\CKT_{{A_{i+1}}}$ are isomorphic,
or
the Cuntz--Krieger triplets 
$\CKT_{{A_i}^t}$ and $\CKT_{A_{i+1}^t}$ are isomorphic
for each $i=0,1,\dots,n-1$.
If the first case occurs,
their one-sided topological Markov shifts
$(X_{A_i},\sigma_{A_i})$ and
$(X_{A_{i+1}},\sigma_{A_{i+1}})$
are eventually conjugate,
so that 
their two-sided topological Markov shifts
$(\bar{X}_{A_i},\bar{\sigma}_{A_i})$ and $(\bar{X}_{A_{i+1}},\bar{\sigma}_{A_{i+1}})$
are topologically conjugate
(\cite{MaJOT2015}). 
If the second case occurs,
their one-sided topological Markov shifts
$(X_{A^t_i},\sigma_{A^t_i})$ and
$(X_{A^t_{i+1}},\sigma_{A^t_{i+1}})$
are eventually conjugate,
so that 
their two-sided topological Markov shifts
$(\bar{X}_{A^t_i},\bar{\sigma}_{A^t_i}) $ and $ (\bar{X}_{A^t_{i+1}},\bar{\sigma}_{A^t_{i+1}})$
are topologically conjugate.
Let $h: \bar{X}_{A^t_i}\longrightarrow \bar{X}_{A^t_{i+1}}$
be a homeomorphism whichi gives rise to a topological conjugacy between them.
Since the two-sided topological Markov shift
defined by the transposed matrix is 
the inverse of the original two-sided topological Markov shift,
the inverse $h^{-1}$
gives rise to a topological conjugacy between 
$
(\bar{X}_{A_i},\bar{\sigma}_{A_i})
$
and
$
 (\bar{X}_{A_{i+1}},\bar{\sigma}_{A_{i+1}}).
$
By connecting these topological conjugacies,
we have a topological conjugacy between 
$(\bar{X}_A,\bar{\sigma}_A), (\bar{X}_B,\bar{\sigma}_B)$.
\end{proof}




\bigskip

{\it Acknowledgments:}
The author thanks Hiroki Matui for his advices and suggestions
in Lemma \ref{lem:Matuilemma}.
A part of this work was done during staying at the Institut Mittag-Leffler
in the research program {\it Classification of operator algebras: complexity, rigidity and dynamics}.
The author is grateful to the organizers of the program and the Institut Mittag-Leffler for its hospitality
and support.
This work was also supported by JSPS KAKENHI Grant Number 15K04896.



\begin{thebibliography}{99}






\bibitem{AER}
{\sc S. Arklint, S. Eilers and E. Ruiz},
{\it A dynamical characterization of diagonal preserving $*$-isomorphisms of graph $C^*$-algebras},
arXiv:1605.01202 [mathOA]. 



\bibitem{Bates}
{\sc T. Bates},
{\it Application of the gauge-invariant uniqueness theorem for the Cuntz--Krieger algebras of directed graphs},
Bull. Austral. Math. Soc. 
{\bf 65}(2002), pp.  57--67.

\bibitem{BP}
{\sc T. Bates and D. Pask},
{\it Flow equivalence of graph algebras},
Ergodic Theory Dynam. Systems
{\bf 24}(2004), pp.  367--382.











\bibitem{Brown}
{\sc L. G. Brown},
{\it Stable isomorphism of hereditary subalgebras of $C^*$-algebras},
Pacific J.\ Math.\ {\bf 71}(1977), pp.\ 335--348.



\bibitem{BGR}
{\sc L. G. Brown, P. Green and M. A. Rieffel},
{\it Stable isomorphism and strong Morita equivalence of $C^*$-algebras},
 Pacific  J.\ Math.\ {\bf 71}(1977), pp.\ 349--363.

\bibitem{CR}
{\sc T. M. Carlsen and J. Rout},
{\it Diagonal-preserving gauge invariant isomorphisms of graph $C^*$-algebras},
preprint, arXiv: 1610.00692 [mathOA].


\bibitem{CRS}
{\sc T. M. Carlsen, E. Ruiz and A. Sims},
{\it Equivalence and stable isomorphism of groupoids, and diagonal-preserving 
stable isomorphisms of graph $C^*$-algebras and Leavitt path algebras},
preprint, arXiv: 1602.02602 [mathOA].














\bibitem{C}{\sc J. Cuntz},
{\it Simple $C^*$-algebras generated by isometries},
 Comm.\ Math.\ Phys.\ {\bf 57}(1977), pp.\ 173--185.











\bibitem{Cu3}
{\sc J. ~Cuntz}, 
{\it A class of $C^*$-algebras and topological Markov chains II: reducible chains and the Ext-functor for $C^*$-algebras},
Invent.\ Math.\ {\bf 63}(1980),
pp.\ 25--40.





\bibitem{CK}{\sc J. ~Cuntz and W. ~Krieger},
{\it A class of $C^*$-algebras and topological Markov chains},
 Invent.\ Math.\
 {\bf 56}(1980), pp.\ 251--268.


\bibitem{EKaKa}
{\sc G. K.  Eleftherakis, \, E. T. A.  Kakariadis and E. G. Katsoulis},
{\it  Morita equivalence of C*-correspondences passes to the related operator algebras},
preprint, arXiv:1611.04169 [math OA]. 
%








\bibitem{KaKaTransAMS}
{\sc E. T. A. Kakariadis and E. G. Katsoulis},
{\it Contributions to the theory of $C^*$-correspondences with applications to
multivariable dynamics},
Trans. Amer. Math. Soc. {\bf 364}(2012), pp. \  6605--6630.

\bibitem{KaKaJFA2014}
{\sc E. T. A. Kakariadis and E. G. Katsoulis},
{\it $C^*$-algebras and equivalences for $C^*$-correspondences},
J. Funct. Anal. {\bf 266}(2014), pp. \  956--988.




\bibitem{Kitchens}{\sc B.~P. ~Kitchens},
{\it Symbolic dynamics}, 
Springer-Verlag, Berlin, Heidelberg and New York (1998).





\bibitem{LM}{\sc D. ~Lind and B. ~Marcus},
{\it An introduction to symbolic dynamics and coding},
 Cambridge University Press, Cambridge
(1995).










\bibitem{MaETDS2004} {\sc K. Matsumoto},
{\it Strong shift equivalence of symbolic dynamical systems 
and Morita equivalence of $C^*$-algebras},
Ergodic Theory Dynam. Systems {\bf 24}(2004), pp.\ 199--215.






















\bibitem{MaJOT2015}{\sc K. Matsumoto},
{\it Strongly continuous orbit equivalence of 
one-sided topological Markov shifts},
J. Operator Theory {\bf 74}(2015), pp. 101--127.




\bibitem{MaPre2015}
{\sc K. Matsumoto},
{\it Continuous orbit equivalence, flow equivalence of Markov shifts and circle actions on Cuntz--Krieger algebras},
Math. Z. (2016), DOI:10.1007/s00209-016-1700-3, 21 pages.  

\bibitem{MaPAMS2016}
{\sc K. Matsumoto},
{\it Uniformly continuous orbit equivalence of Markov shifts and gauge actions on Cuntz--Krieger algebras},
Proc. Amer. Math. Soc., AMS Early View, DOI:10.1090/proc/13387,11 pages.  




\bibitem{MaPre2016} {\sc K. Matsumoto},
{\it Topological conjugacy of topological Markov shifts and Cuntz--Krieger algebras},
preprint, arXiv: 1604.02763v4.


\bibitem{MaPre20162} {\sc K. Matsumoto},
{\it Strong shift equivalence of symbolic dynamical systems 
and Morita equivalence of $C^*$-algebras II},
in preparation.










%
\bibitem{MPT}
{\sc P. S. Muhly, D.  Pask and M. Tomforde},
{\it Strong shift equivalence of $C^*$-correspondences},
Israel J. Math. {\bf 167} (2008), pp. \ 315--346. 

%
\bibitem{MS}
{\sc P. S. Muhly and B. Solel},
{\it On the Morita equivalence of tensor algebras},
Proc. London Math. Soc. {\bf 3} (2000), pp. \ 113--168. 



















\bibitem{Rieffel1}
{\sc M. A. Rieffel},
{\it Induced representations of $C^*$-algebras},
Adv.\ in Math.\
{\bf 13}(1974), pp. 176--257.













\bibitem{Ro}
{\sc M. R{\o}rdam},
{\it Classification of Cuntz-Krieger algebras},
 K-theory {\bf 9}(1995), pp.\  31--58.






\bibitem{Tomforde}{\sc M. Tomforde},
{\it Strong shift equivalence in the $C^*$-algebraic setting:
 graphs and $C^*$-correspondences},  
Operator theory, Operator Algebras, and Applications,  221--230, Contemp. Math., {\bf 414}, Amer. Math. Soc., Providence, RI, 2006.












\bibitem{Williams} {\sc R. F. Williams},
{\it Classification of subshifts of finite type}, 
 Ann.\ Math.\  {\bf 98}(1973), pp.\ 120--153.
 erratum, Ann.\ Math.\
{\bf 99}(1974), pp.\ 380--381.

\end{thebibliography}
\end{document}